\crefname{equation}{eq.}{eqs.}
\Crefname{equation}{Equation}{Equations}
\newcommand{\jpb}[1]{\langle #1 \rangle}
\newcommand{\norm}[1]{\| #1 \|}
\newcommand{\abs}[1]{\left| #1 \right|}
\DeclareMathOperator\supp{supp}
\newtheorem{theorem}{Theorem}[section]
\newtheorem{lemma}[theorem]{Lemma}
\newtheorem{proposition}[theorem]{Proposition}
\theoremstyle{remark}
\numberwithin{equation}{section}
\title[Solitary waves in equations with nonlocal cubic terms]{Remarks on solitary waves in equations with nonlocal cubic terms}
\author{Johanna Ulvedal Marstrander\(^1\)}
\address{\(^1\)Department of Mathematical Sciences, NTNU -- Norwegian University of Science and Technology, 7491 Trondheim, Norway}
\email{johanna.u.marstrander@ntnu.no}
\keywords{solitary wave, nonlinear dispersive equation, nonlocal equation, water wave, calculus of variations}
\subjclass{76B15, 76B25, 35A15}
\thanks{The author acknowledges the support of the project IMod (Grant No. 325114) from the Research Council of Norway.
Part of this work was carried out during the workshop MaGIC 2023. 
}
\begin{document}

\begin{abstract}
In this overview paper, we show existence of smooth solitary-wave solutions to the nonlinear, dispersive evolution equations of the form 
\begin{equation*}
    \partial_t u + \partial_x(\Lambda^s u + u\Lambda^r u^2) = 0,
\end{equation*}
where \(\Lambda^s, \Lambda^r\) are Bessel-type Fourier multipliers. The linear operator may be of low fractional order, \(s>0\), while the operator on the nonlinear part is assumed to act slightly smoother, \(r<s-1\). The problem is related to the mathematical theory of water waves; we build upon previous works on similar equations, extending them to allow for a nonlocal nonlinearity. Mathematical tools include constrained minimization, Lion's concentration--compactness principle, spectral estimates, and product estimates in fractional Sobolev spaces. 
\end{abstract}

\maketitle
\section{Introduction}
\label{sec:introduction}
Many one-dimensional model equations for water waves can be written as
\begin{equation}
    \partial_tu + \partial_x(L u - n(u)) = 0,
    \label{eq:model_general}
\end{equation}
where \(u\) is a real-valued function of time and space, \(L\) is a Fourier multiplier and \(n\) is a local, nonlinear function. This class of equations includes many well-known models, such as the Korteweg--de Vries and Benjamin--Ono equations when the operator \(L\) is a positive-order operator, and Whitham-type equations when \(L\) has negative order \cite{linares2014, ehrnstrom2023}. We restrict our attention here to operators \(L\) of positive order, which physically corresponds to situations with surface tension and stronger dispersion \cite{lannes2013}. 

In the present work, we consider a way to include frequency interaction also in the nonlinear term. This is natural in the context of water waves, since exact modeling may give rise to nonlocal nonlinearities \cite{ionescu2018,lannes2013}. Recent years have seen an increased interest in these types of equations, see e.g. \cite{dinvay2021,duchene2018,orke2022} concerning traveling waves. 

There has been extensive research on whether or not \cref{eq:model_general} under different assumptions on \(L\) and \(n\) admits a type of solutions called \emph{solitary waves}. These are localized solutions that propagate at constant speed \(\nu\) while retaining their original shape: 
\begin{equation*}
    (x,t)\mapsto u(x-\nu t) \quad\text{and}\quad u(x-\nu t)\to 0 \text{ as } \abs{x-\nu t}\to \infty.
\end{equation*}
While their existence may be demonstrated in several ways \cite{buffoni2004,groves2011,stefanov2020,arnesen2023}, many constructions of solitary waves are based upon Weinstein's argument in\cite{weinstein1987}, where solutions are found by \(L^2\)-constrained minimization with the help of Lions' concentration--compactness method \cite{lions1984}, see e.g. \cite{maehlen2020} and references therein. Building upon \cite{arnesen2016} and \cite{maehlen2020}, we shall investigate how this technique may be modified to accomodate nonlocal nonlinearities. A variation of Weinstein's argument adapted to negative \(s\), see \cite{ehrnstrom2012}, has later been developed and applied to bidirectional systems with negative-order Fourier multipliers in nonlinear terms \cite{duchene2018,dinvay2021}. Nonlocal nonlinearities also arise in works where solitary waves are constructed using a local variational reduction \cite{groves2008,ehrnstrom2022a}. In this paper, however, we will study nonlocal nonlinearities in the one-dimensional setting for positive \(s\).

Solitary-wave solutions to \cref{eq:model_general} arise when dispersive effects due to the term \(Lu\) and nonlinear effects due to \(n(u)\) are balanced \cite{lannes2013, linares2014}. Typically, one can consider operators \(L\) of order \(s\), and nonlinearities with leading-order power \(p\), and show that solitary-wave solutions to \cref{eq:model_general} exist for a range of parameters \(s, p\). For positive-order operators \(L\), existence is shown for a given \(p\) if \(s\) is sufficiently high, \(s>\frac{p-1}{p+1}\) \cite{arnesen2016, maehlen2020}. Contrariwise, the fractional KdV equation does not admit solitary-wave solutions for \(s\) below this limit \cite{linares2014}. Thus, if one wishes to find solitary waves when the dispersion is weak, the nonlinearity must be correspondingly weak. 

In our case, instead of considering a range of powers \(p\), we fix a cubic nonlinearity and apply an operator of order \(r\). Let \(\Lambda^{\alpha}\) be the Bessel Fourier multiplier in space defined by
\begin{equation*}
    \widehat{\Lambda^{\alpha} u}(\xi) = (1+\xi^2)^{\frac{\alpha}{2}}\hat{u}(\xi).
\end{equation*}
For real numbers \(s, r\), we shall consider the equation
\begin{equation}
    \partial_t u + \partial_x (\Lambda^s u - u\Lambda^r u^2) = 0.
    \label{eq:time_dependent}
\end{equation}
It is also possible to replace \(\Lambda^s, \Lambda^r\) by more general operators, see \cref{sec:general_assumptions}. A motivation for studying the cubic nonlinearity \(u\Lambda^r u^2\) is that a variational formulation is readily available. This is not the case for the quadratic equivalent \(u\Lambda^r u\). To use a similar approach for an equation with a quadratic nonlocal term, one needs to consider pseudo-products. This was recently done by the author in \cite{marstrander2023}. The author is grateful to Douglas Svensson Seth for pointing out helpful relationships in the variational formulation.

Observe that when \(r=0\), \cref{eq:time_dependent} coincides with \cref{eq:model_general} with \(L=\Lambda^s\) and \({n(u)= u^3}\).
Note that the introduction of an operator in the nonlinear term affects the interplay between dispersive and nonlinear effects.  Just as the power of the nonlinearity \(p\) is bounded above by the order of the dispersive operator \(s\) in \cite{arnesen2016,maehlen2020}, \(r\) must be bounded above by \(s\) in our work. 
 We shall assume that \(s,r\in \mathbb{R}\) satisfy
\begin{equation}
    s>0,\qquad r<s-1.\label{eq:assump_sr}
\end{equation}
{Here, \(r\) may be positive or negative.} When \(r=0\), the lower bound for \(s\) that follows agrees with \(p<2s+1\) from \cite{arnesen2016}. 
However,  if \(r\) is sufficiently negative, we will see that there are solitary-wave solutions to \cref{eq:time_dependent} for all \(s>0\). Thus the existence of solitary waves in this type of equation does not depend directly on the power of the nonlinearity, but rather on the strength of the nonlinearity in a general sense. 
Under the solitary-wave ansatz, \cref{eq:time_dependent} becomes
\begin{equation}
    -\nu u + \Lambda^s u - u\Lambda^r u^2 = 0.
    \label{eq:solitary_wave}
\end{equation}

\begin{theorem}
{Let \(s>0,\, r<s-1\).} For every \(\mu>0\), there is a \(u\in H^{\infty}(\mathbb{R})\) satisfying \(\frac{1}{2}\norm{u}_{L^2}^2 = \mu\) that solves the solitary-wave equation \cref{eq:solitary_wave}. The corresponding solitary waves are subcritical, that is, the wave speed satisfies \(\nu<1\). {Furthermore, for any fixed \(\mu_{0}>0\), the estimates
\begin{equation}
    \norm{u}_{L^{\infty}}\lesssim \norm{u}_{H^{\frac{s}{2}}}\eqsim \mu^{\frac{1}{2}}, \qquad 1-\nu \eqsim \mu^2.\label{eq:small_solutions_est}
\end{equation}
hold uniformly in \(\mu\in(0,\mu_{0})\).
}
\label{thm:main}
\end{theorem}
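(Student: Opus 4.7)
The plan is to obtain the solitary waves as minimizers of a constrained variational problem, in the Weinstein tradition adapted in \cite{arnesen2016,maehlen2020}. I would define the energy
\begin{equation*}
    \mathcal{E}(u) = \frac{1}{2}\int_{\mathbb{R}} u\Lambda^s u\,dx - \frac{1}{6}\int_{\mathbb{R}} u^2\Lambda^r u^2\,dx,
\end{equation*}
whose constrained Euler--Lagrange equation on \(\{u\in H^{s/2}(\mathbb{R}) : \tfrac{1}{2}\norm{u}_{L^2}^2=\mu\}\) is exactly \cref{eq:solitary_wave}, with the Lagrange multiplier in the role of the wave speed \(\nu\), and study
\begin{equation*}
    I_\mu := \inf\{\mathcal{E}(u) : u\in H^{s/2}(\mathbb{R}),\ \tfrac{1}{2}\norm{u}_{L^2}^2=\mu\}.
\end{equation*}

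The analytic core will be a Gagliardo--Nirenberg-type bound for the nonlocal quartic term,
\begin{equation*}
    \int_{\mathbb{R}} u^2\Lambda^r u^2\,dx \lesssim \norm{u}_{L^2}^{2(1+\theta)}\norm{u}_{H^{s/2}}^{2(1-\theta)},\qquad \theta>0,
\end{equation*}
which I expect to follow from the Cauchy--Schwarz splitting \(\int u^2\Lambda^r u^2\leq \norm{u^2}_{L^2}\norm{\Lambda^r u^2}_{L^2}\), a fractional product estimate for \(\Lambda^r(u^2)\), and Sobolev interpolation; the gap \(r<s-1\) is precisely what should deliver \(\theta>0\). Such an estimate makes \(\mathcal{E}\) continuous on \(H^{s/2}\), keeps minimizing sequences bounded there, and, by evaluation on a broad low-frequency bump, yields \(I_\mu<\mu\) for every \(\mu>0\).

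Next, I would apply Lions' concentration--compactness principle to the densities \(|u_n|^2\) of a minimizing sequence. The vanishing alternative should be excluded by \(I_\mu<\mu\), because \(\Lambda^s\geq 1\) forces \(\tfrac{1}{2}\int u_n\Lambda^s u_n\geq\mu\) and a vanishing sequence would make the quartic term disappear. Dichotomy should be excluded by strict subadditivity \(I_{\lambda\mu}<\lambda I_\mu\) for \(\lambda>1\), derived by rescaling \(u\mapsto\sqrt{\lambda}\, u\) and using that \(\int u_n^2\Lambda^r u_n^2\) is uniformly bounded below along any minimizing sequence (again by \(I_\mu<\mu\)). Concentration then remains; after translations, \(u_n\to u\) strongly in \(L^2\) and weakly in \(H^{s/2}\), and continuity of the quartic under this convergence -- via the same product estimate -- together with weak lower semicontinuity of \(\int u\Lambda^s u\), gives \(\mathcal{E}(u)\leq I_\mu\). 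Hence \(u\) is a minimizer.

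The Euler--Lagrange identity is \cref{eq:solitary_wave}, and \(u\in H^\infty\) will follow by bootstrapping \((\nu+\Lambda^s)u=u\Lambda^r u^2\) with the same product estimates, each step gaining \(s-r-1>0\) derivatives. For the wave-speed bound, I would pair the equation with \(u\) to obtain
\begin{equation*}
    2\mu(1-\nu) = \int_{\mathbb{R}} u^2\Lambda^r u^2\,dx - \int_{\mathbb{R}} u(\Lambda^s-1)u\,dx,
\end{equation*}
and combine it with the rearrangement \(\int u^2\Lambda^r u^2 > 3\int u(\Lambda^s-1)u\) of \(\mathcal{E}(u)=I_\mu<\mu\) to deduce \(2\mu(1-\nu)>2\int u(\Lambda^s-1)u\geq 0\). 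The quantitative estimates \cref{eq:small_solutions_est} will follow by balancing this identity with the interpolation inequality, the coercive \(H^{s/2}\)-bound, and \(\norm{u}_{L^\infty}\lesssim\norm{u}_{H^{s/2}}\) from Sobolev (or, for small \(s\), Bernstein). The main obstacle throughout is the nonlocal quartic: the product estimate for \(\Lambda^r(u^2)\) must be established uniformly across the full range \(r<s-1\), \(s>0\), calling for Kato--Ponce-type arguments when \(r\geq 0\) and Littlewood--Paley decompositions when \(s\) is small, and the same estimate will be what powers the continuity step in the concentration--compactness argument.
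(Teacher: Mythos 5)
Your overall architecture---constrained minimization under $\mathcal{Q}(u)=\mu$, Lions' concentration--compactness, then a bootstrap---is the same as the paper's, but the sketch glosses over exactly the points where the nonlocal cubic term departs from the classical local case, and two of your concrete devices would fail. First, the proposed ``analytic core'' is not viable across the stated range: the Cauchy--Schwarz splitting $\int u^2\Lambda^r u^2\,dx\le\norm{u^2}_{L^2}\norm{\Lambda^r u^2}_{L^2}$ requires $u\in L^4(\mathbb{R})$, which $H^{\frac{s}{2}}(\mathbb{R})$ does not provide when $s<\frac12$---and reaching all $s>0$ (for $r$ sufficiently negative) is the point of the theorem. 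The workable splitting is the symmetric one, $\int u^2\Lambda^r u^2\,dx=\norm{\Lambda^{\frac{r}{2}}u^2}_{L^2}^2=\norm{u^2}_{H^{\frac{r}{2}}}^2$, combined with the fractional product estimates of \cref{prop:product_rule_hilbert}; this is \cref{lem:N_upper}. Second, excluding vanishing is not a soft consequence of $I_\mu<\mu$: one must convert the lower bound on the quartic into a lower bound on a \emph{localized} $L^q$ norm with $q$ admissible in the non-vanishing lemma, and for $s\le\frac12$ the natural choice $q=4$ is inadmissible (one needs $2<q<\frac{2}{1-s}$). This forces the dedicated estimate $\norm{u^2}_{H^{\frac{r}{2}}}\lesssim\norm{u}_{L^q}\norm{u}_{H^{\frac{s}{2}}}$ with $2<q<\frac{2}{1-\frac{s}{2}}$ (\cref{lem:q}), which nothing in your sketch produces.

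Third, strict subadditivity by itself does not rule out dichotomy; you must also show that the energy asymptotically splits along the dichotomizing decomposition, and for the nonlocal quartic this is where the genuinely new work lies: a commutator estimate $\abs{\int v^2(\rho_R\Lambda^r u^2-\Lambda^r(\rho_R u^2))\,dx}\to 0$ (\cref{lem:commutator_N}), delicate precisely because $u^2$ need not lie in $L^2(\mathbb{R})$. Your proposal omits this step entirely. Fourth, ``balancing the identity with the interpolation inequality'' cannot yield $1-\nu\eqsim\mu^2$: interpolation only gives $\tilde{\mathcal{N}}(u)\lesssim\mu^2$, hence $1-\nu\lesssim\mu$. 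The sharp rate requires $\tilde{\mathcal{N}}(u)\eqsim\mu^3$, obtained from an improved infimum bound $\tilde{\Gamma}_\mu<\mu-\kappa\mu^3$ together with a high/low-frequency decomposition (\cref{lem:N_improved}); likewise the uniform $L^\infty$ bound as $\mu\to0$ requires inverting $\Lambda^s-\nu+1$ rather than $\Lambda^s-\nu$, whose inverse blows up since $1-\nu\eqsim\mu^2$. Finally, two smaller slips: the correct normalization of the quartic functional is $\frac14\int u^2\Lambda^r u^2\,dx$ (with $\frac16$ the Euler--Lagrange equation acquires a factor $\frac23$ on the nonlinearity), and the bootstrap operator is $\Lambda^s-\nu$, invertible because $\nu<1$, not $\Lambda^s+\nu$.
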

{ We show existence of smooth solitary waves of any size, small or large.} Here, the space \({H^{\infty}(\mathbb{R}) = \cap_{t\in \mathbb{R}}H^{t}(\mathbb{R})}\), where \(H^{t}(\mathbb{R})\) are fractional Sobolev spaces, see \cref{sec:variatonal_formulation}. 
 { In \cref{eq:small_solutions_est}, the implicit constants may depend on \(\mu_{0}\), but \(\mu_{0}\) does not need to be small.}

While the upper bound on \(r\) in \cref{eq:assump_sr} appears in the proof through embedding and interpolation theorems, it is in fact related to properties of the equation and we cannot expect to find solutions below \(s>r+1\). When \(r=0\) we get the special case of \cref{eq:model_general} under the solitary-wave ansatz that is most similar to our equation,
\begin{equation}
    -\nu u + \Lambda^s u - u^3 =0.
    \label{eq:model}
\end{equation}
In this case, our assumptions on \(r,s\) imply that \(s>1\). For the generalized fKdV-equation with \(p=3\), the value \(s=1\) is the critical exponent below which solitary-wave solutions are no longer stable \cite{bona1987}, and the method we use yields a set of minimizers that form stable sets of solutions \cite{albert1999,arnesen2016} As mentioned, existence of solutions to \cref{eq:model} for \(s>\frac{p-1}{p+1} =\frac{1}{2}\) was shown in \cite{arnesen2016}, but then by appealing to a scaling argument at the cost of stability.

{
To keep the exposition clear and highlight what is new, our strategy in proving \cref{thm:main} will be to outline the method for \cref{eq:model} and discuss the changes that are made to accommodate the nonlocal nonlinearity in \cref{eq:solitary_wave}. This constitutes \cref{sec:outline}. \Cref{eq:model} is treated in \cite{arnesen2016} and we mostly follow that paper. We will, however, also refer to specific lemmas in \cite{maehlen2020} where some arguments have been considerably simplified. We prove rigorously only the parts that are new for \cref{eq:solitary_wave}. 
In \cref{sec:general_assumptions}, we discuss possible generalizations.
}

\section{Solitary-waves solutions to \cref{eq:model,eq:solitary_wave}}
\label{sec:outline}

\subsection{Formulation as a constrained minimization problem.}
\label{sec:variatonal_formulation}
In the classical case, existence of solutions to \cref{eq:model} is established via a related constrained minimization problem. Let \(\mathcal{S}'(\mathbb{R})\) be the space of tempered distributions{, and let \(L^{p}( \mathbb{R}), 1\leq p<\infty\) be the standard Lebesgue spaces with norm \(\left\|u \right\|_{L^{p}}^p = \int_{\mathbb{R}} \left|u \right|^p \,dx\). We search for minimizers in fractional Sobolev spaces, 
\begin{equation*}
    H^{t}(\mathbb{R}) = \{u\in \mathcal{S}'(\mathbb{R})\colon \norm{u}_{H^{t}} = \norm{\Lambda^t u}_{L^2} <\infty\}.
\end{equation*}
} Define the functionals \({\mathcal{E}, \mathcal{Q}, \mathcal{L}, \mathcal{N}\colon H^{\frac{s}{2}}(\mathbb{R}) \to \mathbb{R}}\) by 
\begin{equation*}
    \mathcal{E}(u)= \underbrace{\frac{1}{2}\int_{\mathbb{R}} u\Lambda^s u\,dx}_{=\mathcal{L}(u)} - \underbrace{\frac{1}{4}\int_{\mathbb{R}} u^4\,dx}_{=\mathcal{N}(u)} \quad \text{and} \quad    \mathcal{Q}(u)= \frac{1}{2}\int_{\mathbb{R}} u^2\,dx.
\end{equation*}
Due to the Lagrange multiplier principle, minimizers of the constrained minimization problem
\begin{equation*}
    \Gamma_{\mu} = \inf\{\mathcal{E}(u)\colon u\in H^{\frac{s}{2}}(\mathbb{R}) \text{ and }\mathcal{Q}(u)=\mu\}
\end{equation*}
will solve \cref{eq:model} with wave speed \(\nu\) being the Lagrange multiplier.

To formulate the corresponding minimization problem for \cref{eq:solitary_wave}, we simply replace \(\mathcal{N}\) with \(\tilde{\mathcal{N}}\) incorporating the nonlocal operator: 
\begin{equation*}
    \begin{split}
    \tilde{\mathcal{E}}(u)= \underbrace{\frac{1}{2}\int_{\mathbb{R}} u\Lambda^s u\,dx}_{=\mathcal{L}(u)} - \underbrace{\frac{1}{4}\int_{\mathbb{R}} u^2\Lambda^r u^2\,dx}_{=\tilde{\mathcal{N}}(u)},\\
    \tilde{\Gamma}_{\mu} = \inf\{\tilde{\mathcal{E}}(u)\colon u\in H^{\frac{s}{2}}(\mathbb{R}) \text{ and }\mathcal{Q}(u)=\mu\}
    \end{split}
\end{equation*}
It is easily verified that the Fréchet derivative of \(\tilde{\mathcal{N}}\) is \(u\Lambda^r u^2\),
and so a minimizer of \(\tilde{\Gamma}_{\mu}\) solves \cref{eq:solitary_wave} with langrange multiplier \(\nu\):
\begin{equation*}
    \mathcal{L}'(u)- \tilde{\mathcal{N}}'(u) - \nu\mathcal{Q}'(u) = -\nu u + \Lambda^{s}u - u\Lambda^r u^2 = 0.
\end{equation*}

\subsubsection{The infimum is well defined}
To show that \(\Gamma_{\mu}\) is bounded below, the strategy is to bound \(\mathcal{N}\) in terms of \(\mathcal{L}\) to a power \(\gamma<1\). Then we would have
\begin{equation*}
    \mathcal{E}(u) \geq \mathcal{L}(u)- \mathcal{\mathcal{N}}(u) \geq \mathcal{L}(u)- C\mathcal{L}(u)^{\gamma} > -\infty,
\end{equation*}
since \(\mathcal{L}(u)\) is positive and the first term dominates the last as \(\mathcal{L}(u)\to \infty\). 
This is straightforward since \(\mathcal{N} \eqsim \norm{u}_{L^4}^4\) and \(\mathcal{L}(u)\eqsim \norm{u}_{H^{\frac{s}{2}}}^2\) and Sobolev embedding and interpolation yields
\begin{equation}
    \norm{u}_{L^4}\lesssim \norm{u}_{H^{\frac{1}{4}}}\lesssim \norm{u}_{L^2}^{1-\frac{1}{2s}}\norm{u}_{H^{\frac{s}{2}}}^{\frac{1}{2s}},\label{eq:L4_bound_upper}
\end{equation}
and \(\frac{1}{2s}<\frac{1}{2}\) as \(s>1\).

{The same argument is directly applicable also to \(\tilde{\Gamma}_{\mu}\) as soon as we establish the inequality}
\begin{equation}
    \underbrace{\norm{u^2}_{H^{\frac{r}{2}}}^2}_{\eqsim\tilde{\mathcal{N}}(u)} \lesssim \norm{u}_{L^2}^{4-2\gamma}\underbrace{\norm{u}_{H^{\frac{s}{2}}}^{2\gamma}}_{\eqsim \mathcal{L}(u)^{\gamma}}\label{eq:N_upper}
\end{equation}
{for some \(0<\gamma < 1\)}. The nonlocal operator makes the argument more involved but has the added benefit of "weakening" the nonlinear part \(\tilde{\mathcal{N}}\) when \(r\) is sufficiently negative, allowing us to go below the limit in the classical case, \(s>1\). To show \cref{eq:N_upper}, we rely on product estimates in fractional Sobolev spaces, see \cref{prop:product_rule_hilbert} in \cref{appendix}. 

{
\begin{lemma}
    For \(u\in H^{\frac{s}{2}}(\mathbb{R})\), there is a \(\gamma<1\) such that
    \begin{equation*}
        \norm{u^2}_{H^{\frac{r}{2}}} \lesssim \norm{u}_{L^2}^{2-\gamma}\norm{u}_{H^{\frac{s}{2}}}^{\gamma}.
    \end{equation*}
    \label{lem:N_upper}
\end{lemma}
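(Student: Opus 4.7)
The plan is to apply the fractional Sobolev product estimate of \cref{prop:product_rule_hilbert} to the product \(u^2 = u \cdot u\) and then interpolate the resulting fractional norm between \(L^2\) and \(H^{s/2}\). The subtlety is that one must choose the smoothness indices asymmetrically; a symmetric application is not quite sharp enough.

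Concretely, product rules of this type assert (a version of) \(\norm{u_1 u_2}_{H^{r/2}} \lesssim \norm{u_1}_{H^{\theta_1}}\norm{u_2}_{H^{\theta_2}}\) whenever \(\theta_1 + \theta_2 > r/2 + 1/2\) and \(\max\{\theta_1,\theta_2\} \geq r/2\), plus mild non-negativity side conditions. Taking \(u_1 = u_2 = u\), I would set \(\theta_2 = 0\) and pick \(\theta_1\) just above \(\max\{0,\, r/2 + 1/2\}\), which gives
\begin{equation*}
    \norm{u^2}_{H^{r/2}} \lesssim \norm{u}_{H^{\theta_1}}\norm{u}_{L^2}.
\end{equation*}
Standard interpolation then yields \(\norm{u}_{H^{\theta_1}} \lesssim \norm{u}_{L^2}^{1-2\theta_1/s}\norm{u}_{H^{s/2}}^{2\theta_1/s}\) as soon as \(0\leq \theta_1 \leq s/2\), producing the required estimate with \(\gamma = 2\theta_1/s\).

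The exponent \(\gamma\) is strictly less than one exactly when \(\theta_1 < s/2\). So the argument succeeds precisely when the open interval \(\bigl(\max\{0,\, r/2 + 1/2\},\, s/2\bigr)\) is non-empty, which is equivalent to \(s > 0\) and \(r/2 + 1/2 < s/2\), i.e. the standing assumptions \cref{eq:assump_sr}. In the particularly soft regime \(r \leq -1\) one may even take \(\theta_1 = 0\) and conclude with \(\gamma = 0\).

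The main obstacle I anticipate is appreciating \emph{why} the asymmetric split is needed. A symmetric choice \(\theta_1 = \theta_2\) would force \(\theta_1 \geq r/2\), which clashes with the requirement \(\theta_1 < s/4\) needed to obtain \(\gamma < 1\) whenever \(r\) is near the upper threshold \(s-1\) and \(s\) is not large. Concentrating all the smoothness on a single copy of \(u\) while leaving the other copy in \(L^2\) is exactly what matches the sharp scaling \(r < s - 1\) coming from \cref{eq:assump_sr}.
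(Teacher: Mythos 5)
Your overall architecture (one application of a fractional product estimate followed by interpolation against \(L^2\) and \(H^{\frac{s}{2}}\)) is the same as the paper's, and for \(r\leq 0\) your choice \(\theta_2=0\) with \(\theta_1\) just above \(\max\{0,\frac{r}{2}+\frac{1}{2}\}\) essentially reproduces the paper's argument via \cref{prop:product_rule_hilbert}~(ii). The gap is in the case \(0<r<s-1\) (which the lemma must cover whenever \(s>1\)). The product rule you quote, with the side condition \(\max\{\theta_1,\theta_2\}\geq \frac{r}{2}\), is not a correct bilinear estimate: for a target \(H^{\sigma}\) with \(\sigma>0\), the inequality \(\norm{fg}_{H^{\sigma}}\lesssim\norm{f}_{H^{\theta_1}}\norm{g}_{H^{\theta_2}}\) forces \(\min\{\theta_1,\theta_2\}\geq\sigma\), not the maximum. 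Concretely, with \(\theta_2=0\) take \(f\) a fixed smooth bump equal to \(1\) on the support of some \(\phi\in C_c^{\infty}\) and \(g_N(x)=\cos(Nx)\phi(x)\); then \(\norm{g_N}_{L^2}\eqsim 1\) while \(\norm{fg_N}_{H^{r/2}}=\norm{g_N}_{H^{r/2}}\eqsim N^{r/2}\to\infty\). The same obstruction is visible in \cref{prop:product_rule_hilbert} itself: part (i) with \(t_1=0\) and \(t_2=\theta_1>\frac{1}{2}\) only places the product in \(H^{t_1}=L^2\), which does not embed into \(H^{r/2}\) when \(r>0\). So the step \(\norm{u^2}_{H^{r/2}}\lesssim\norm{u}_{H^{\theta_1}}\norm{u}_{L^2}\) is unjustified precisely in the regime where \(r\) is positive.

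The repair is to distribute the derivatives differently, which is what the paper does: for \(0<r<s-1\) apply \cref{prop:product_rule_hilbert}~(i) with \(t_1=\frac{r}{2}\) and \(t_2=\frac{\tau}{2}\) for some \(1<\tau<s-r\), so that the product genuinely lands in \(H^{r/2}\), and then interpolate \emph{both} factors between \(L^2\) and \(H^{s/2}\); the total regularity spent is \(\frac{r+\tau}{2}<\frac{s}{2}\), giving \(\gamma=\frac{r+\tau}{s}<1\). Your asymmetric philosophy is right in spirit --- the total number of derivatives placed on the two copies of \(u\) need only be a bit more than \(\frac{r}{2}+\frac{1}{2}\), which is strictly below \(s\) by \cref{eq:assump_sr} --- but once the target regularity is positive you cannot load all of it onto a single factor while leaving the other in bare \(L^2\). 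A smaller remark: at the endpoint \(r=-1\) your claim that \(\theta_1=0\) works with \(\gamma=0\) fails, since \(L^1\not\hookrightarrow H^{-1/2}\) and your own condition \(\theta_1+\theta_2>\frac{r}{2}+\frac{1}{2}=0\) is violated there; the paper sidesteps this borderline by replacing \(r\) with some \(\tilde r\in(-1,s-1)\).
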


\begin{proof}
    We treat the cases \(s>1, s\leq 1\) separately and apply \cref{prop:product_rule_hilbert}.

    Suppose \(s>1\). We assume that \(r>0\). Otherwise, we can apply the same method to any \(\tilde{r}\) such that \(0<\tilde{r}<s-1\) and use that \(\norm{u^{2}}_{H^{\frac{r}{2}}}\leq \norm{u^{2}}_{H^{\frac{\tilde{r}}{2}}}\). We pick a \(\tau\in \mathbb{R}\) satisfying
    \[
    1 <\tau < s-r
    \]
    and apply \cref{prop:product_rule_hilbert} (i) with \(t_1 = \frac{r}{2}\) and \(t_2 = \frac{\tau}{2}\):
    \begin{equation*}
    \norm{u^{2}}_{H^{\frac{r}{2}}}\lesssim \norm{u}_{H^{\frac{r}{2}}}\norm{u}_{H^{\frac{\tau}{2}}}
    \end{equation*}
    Interpolation then yields the desired result,
    \begin{equation*}
        \norm{u^{2}}_{H^{\frac{r}{2}}}\lesssim \norm{u}_{L^2}^{2-\frac{r+\tau}{s}}\norm{u}_{H^{\frac{s}{2}}}^{\frac{r+\tau}{s}},
    \end{equation*}
    with \(\gamma = \frac{r+\tau}{s} <\frac{r+ s-r}{s} = 1\).

    Suppose now that \(s\leq 1\). If \(s=1\), we can pick \(\tilde{s}\) satisfying \(r+1<\tilde{s} < 1\), apply the same method to \(\tilde{s}\) instead of \(s\). Hence we can assume that \(s<1\). If \(r>-1\), applying \cref{prop:product_rule_hilbert} (ii) with \(t_1 = 0\) and \(t_2= \frac{r+1}{2}\) yields: 
    \begin{align*}
    \norm{u^{2}}_{H^{\frac{r}{2}}} &\lesssim \norm{u}_{L^{2}}\norm{u}_{H^{\frac{r+1}{2}}}\\ 
    &\lesssim \norm{u}_{L^{2}}^{2-\frac{r+1}{s}}\norm{u}_{H^{\frac{s}{2}}}^{\frac{r+1}{s}},
    \end{align*}
    which shows the result with \(\gamma = \frac{r+1}{s}<1\).
    If \(r\leq-1\), we pick \(\tilde{r}\) satisfying \(-1<\tilde{r}<s-1\) and use \(\tilde{r}\) instead of \(r\). 
    \end{proof}}

\subsubsection{Upper bound for \(\Gamma_{\mu}\)}

To be able to estimate the sizes of the functionals \(\mathcal{L}, \mathcal{N}, \tilde{\mathcal{N}}\), we also need an upper bound for \(\Gamma_{\mu}, \tilde{\Gamma}_{\mu}\). In \cite{arnesen2016}, the author shows that \(\Gamma_{\mu}<0\). However, as remarked in a later version of that paper, that estimate only holds for large solutions or if the symbol of the dispersive operator is homogeneous. Fortunately, only \(\Gamma_{\mu}<\mu\) is needed to obtain the necessary estimates in the rest of the proof. We show this bound using a long-wave ansatz for \(\tilde{\Gamma}_{\mu}\), which of course implies \(\Gamma_{\mu}<\mu\) since that is a special case with \(r=0\). 

\begin{lemma}
    The infimum satisfies \(\tilde{\Gamma}_{\mu}<\mu\). \label{lem:infimum_gamma}
\end{lemma}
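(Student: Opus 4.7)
The plan is to exhibit an explicit test function depending on a small parameter that realizes $\tilde{\mathcal{E}}(u)<\mu$ subject to the constraint $\mathcal{Q}(u)=\mu$. The natural choice is a \emph{long-wave rescaling}: fix a Schwartz profile $\varphi \in \mathcal{S}(\mathbb{R})$ with $\frac{1}{2}\|\varphi\|_{L^2}^2 = \mu$, and for small $\phi>0$ set
\begin{equation*}
    u_{\phi}(x) = \phi^{\frac{1}{2}}\varphi(\phi x).
\end{equation*}
A change of variable shows $\mathcal{Q}(u_\phi)=\mu$ for every $\phi$, so $u_\phi$ is an admissible test function. The idea is that as $\phi \to 0$ the function $u_\phi$ spreads out and flattens, so its Fourier transform concentrates near the origin; this will make the dispersive energy $\mathcal{L}(u_\phi)$ tend to $\mu$ while the nonlinear term $\tilde{\mathcal{N}}(u_\phi)$ picks up a strictly positive linear contribution in $\phi$.

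Next I compute the two functionals using Plancherel. Since $\hat{u}_\phi(\xi)=\phi^{-1/2}\hat{\varphi}(\xi/\phi)$, substituting $\eta=\xi/\phi$ gives
\begin{equation*}
    \mathcal{L}(u_\phi) = \frac{1}{4\pi}\int_{\mathbb{R}}(1+\phi^{2}\eta^{2})^{\frac{s}{2}}|\hat{\varphi}(\eta)|^{2}\,d\eta.
\end{equation*}
Since $(1+\phi^{2}\eta^{2})^{s/2}\to 1$ pointwise and is dominated (for $\phi\le 1$) by $(1+\eta^2)^{s/2}$, which is integrable against $|\hat\varphi|^2$ for Schwartz $\varphi$, dominated convergence yields $\mathcal{L}(u_\phi)\to \mu$. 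Moreover, using the bound $(1+\phi^{2}\eta^{2})^{s/2}-1\le C_{s}\phi^{2}\eta^{2}(1+\eta^{2})^{\max(s/2-1,0)}$ (valid for $\phi\le 1$), one obtains the quantitative refinement $\mathcal{L}(u_\phi)-\mu = o(\phi)$ as $\phi\to 0$. For the nonlinear term, $u_\phi^{2}(x)=\phi\varphi^{2}(\phi x)$ and therefore $\widehat{u_\phi^{2}}(\xi)=\widehat{\varphi^{2}}(\xi/\phi)$, giving
\begin{equation*}
    \tilde{\mathcal{N}}(u_\phi) = \frac{\phi}{8\pi}\int_{\mathbb{R}}(1+\phi^{2}\eta^{2})^{\frac{r}{2}}|\widehat{\varphi^{2}}(\eta)|^{2}\,d\eta.
\end{equation*}
By the same dominated convergence argument (using that $\widehat{\varphi^{2}}$ is Schwartz so $(1+\eta^{2})^{r/2}|\widehat{\varphi^{2}}|^{2}$ is integrable for any $r\in\mathbb{R}$), the integral converges as $\phi\to 0$ to $\|\varphi^{2}\|_{L^{2}}^{2}=\|\varphi\|_{L^{4}}^{4}>0$. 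Hence $\tilde{\mathcal{N}}(u_\phi) = \tfrac{\phi}{4}\|\varphi\|_{L^{4}}^{4} + o(\phi)$.

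Combining the two expansions,
\begin{equation*}
    \tilde{\mathcal{E}}(u_\phi) = \mu - \frac{\phi}{4}\|\varphi\|_{L^{4}}^{4} + o(\phi)
    \qquad \text{as } \phi\to 0^{+},
\end{equation*}
and since $\|\varphi\|_{L^{4}}^{4}>0$, the linear negative term dominates for all sufficiently small $\phi$. Choosing any such $\phi$ we get $\tilde{\Gamma}_\mu \le \tilde{\mathcal{E}}(u_\phi)<\mu$, as claimed. The main technical point—rather than a serious obstacle—is controlling the error in the dispersive part $\mathcal{L}(u_\phi)-\mu$ uniformly for arbitrary $s>0$ (including $s>2$), which is the reason for using a Schwartz profile so that moments of $\hat\varphi$ of every order are available.
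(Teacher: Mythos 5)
Your proposal is correct and follows essentially the same route as the paper: a long-wave rescaling $\sqrt{\theta}\varphi(\theta x)$ preserving the constraint, with the dispersive part exceeding $\mu$ only by $O(\theta^2)$ while the nonlinear part contributes a strictly positive term of order $\theta$. The only (immaterial) difference is that you extract the exact leading asymptotics via dominated convergence, whereas the paper settles for the one-sided bounds $\mathcal{L}(\phi_\theta)\leq\mu+C_1\theta^2$ and $\tilde{\mathcal{N}}(\phi_\theta)\geq C_2\theta$, the latter via the pointwise inequality $\jpb{\theta\xi}^r\geq\min(\jpb{\xi}^r,1)$.
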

\begin{proof}
For \(\phi\in \mathcal{S}(\mathbb{R})\) satisfying \(\mathcal{Q}(\phi) = \mu\) and \(0<\theta<1\), let \(\phi_{\theta}(x) = \sqrt{\theta}\phi(\theta x)\).  Then \(\mathcal{Q}(\phi_{\theta}) = \mu\) and by properties of the Fourier transform,
\begin{equation*}
        \mathcal{L}(\phi_{\theta}) = \mu +\frac{1}{2}\int_{\mathbb{R}} (\jpb{\xi}^s-1) \abs{\widehat{\phi_{\theta}}(\xi)}^2\,d\xi= \mu + \frac{1}{2}\int_{\mathbb{R}} (\jpb{\theta\xi}^s-1) \abs{\widehat{\phi}(\xi)}^2\,d\xi.
\end{equation*}
If \(s<2\), then \(\jpb{\theta\xi}^s-1 \lesssim \abs{\theta\xi}^2\), while if \(s\geq 2\), then \(\jpb{\theta\xi}^s-1 \lesssim \abs{\theta\xi}^2 + \abs{\theta\xi}^s\). Since \(0<\theta<1\) and \(\phi\in \mathcal{S}(\mathbb{R})\), this implies that there is a constant \(C_1>0\) such that
\begin{equation*}
    \mathcal{L}(\phi_\theta)\leq \mu + C_1\theta^2.
\end{equation*}
Noting that \(\jpb{\theta\xi}^r\geq \min(\jpb{\xi}^r, 1)\) for all \(\xi\in \mathbb{R}\), we can similarly conclude that
\begin{equation*}
        \tilde{\mathcal{N}}(\phi_{\theta}) = \frac{1}{4} \int_{\mathbb{R}} \jpb{\xi}^r \abs{\widehat{\phi_{\theta}^2}(\xi)}^2\,d\xi = \frac{1}{4}\theta\int_{\mathbb{R}} \jpb{\theta\xi}^r\abs{\widehat{\phi^2}(\xi)}^2\,d\xi \geq C_2\theta
\end{equation*}
for some \(C_2>0\). Thus
\begin{equation*}
    \mathcal{E}(\phi_{\theta})\leq \mu + C_1 \theta^2 - C_2 \theta,
\end{equation*}
and picking \(\theta>0\) small enough ensures that 
\begin{equation*}
    \Gamma_{\mu}\leq \mathcal{E}(\phi_{\theta})<\mu.
\end{equation*} 
\end{proof}

\subsection{Concentration--compactness}
Although \(\Gamma_{\mu}, \tilde{\Gamma}_{\mu}>\infty\), minimizing sequences of \(\Gamma_{\mu}, \tilde{\Gamma}_{\mu}\) do not necessarily converge as \(\mathbb{R}\) is not compact. To overcome this, we use Lions' concentration--compactness principle \cite{lions1984}. Informally, any bounded sequence will either \emph{vanish} (the mass spreads out), \emph{dichotomize} (the mass separates in space) or \emph{concentrate} (most of the mass remains in a bounded domain). The concentration--compactness principle is stated in a form suitable for our purpose in \cite[Lemma 2.6]{arnesen2016}, but we explain what needs to be shown in the next sections. We will apply the result to the sequence \(\{\frac{1}{2}u_n^2\}_{n\in \mathbb{N}}\), where \(\{u_n\}_{n\in \mathbb{N}}\) is a minimizing sequence for \(\Gamma_{\mu}\) or \(\tilde{\Gamma}_{\mu}\). The strategy will be to show that any minimizing sequence will have a subsequence for which we can exclude both vanishing and dichotomy. Then we show that concentration leads to convergence.

\subsubsection{Excluding vanishing}
By definition, the sequence \(\{\frac{1}{2}u_n^2\}_{n\in \mathbb{N}}\) \emph{vanishes} if for all \({r<\infty}\)
\begin{equation*}
    \lim_{n\to\infty}\sup_{y\in\mathbb{R}}\int_{y-r}^{y+r}u_n^2\,dx = 0.
\end{equation*}
It turns out that if \(\norm{u}_{H^{\frac{s}{2}}}<\infty\) and \(\norm{u}_{L^q}\geq\delta>0\) for some suitable \(q\), then \(\{\frac{1}{2}u_n^2\}_{n\in \mathbb{N}}\) cannot vanish, due to the following result.
\begin{lemma}[\!\!{\cite[Lemma 3.4]{arnesen2016}}]
    Let \(v\in H^{\frac{s}{2}}(\mathbb{R})\) and suppose \(q\) satisfies \(q>2\) if \(s\geq 1\) and \(2<q<\frac{2}{1-s}\) if \(s<1\). Given \(\delta>0\), suppose that \({\norm{v}_{H^{\frac{s}{2}}}^{-1}, \norm{v}_{L^q}\geq \delta}\). Then there is \(\varepsilon>0\) such that
    \begin{equation}
        \sup_{j\in\mathbb{Z}}\int_{j-2}^{j+2}\abs{v}^q\,dx \geq \varepsilon.\label{eq:vanish}
    \end{equation}
    \label{lem:vanish}
\end{lemma}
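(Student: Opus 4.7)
The plan is to reduce the lemma to the quantitative Gagliardo--Nirenberg-type estimate
\begin{equation*}
    \norm{v}_{L^{q}(\mathbb{R})}^{q} \lesssim \left(\sup_{j\in \mathbb{Z}} \int_{j-2}^{j+2} \abs{v}^{q}\,dx\right)^{1-2/q} \norm{v}_{H^{\frac{s}{2}}(\mathbb{R})}^{2},
\end{equation*}
from which, combined with the hypotheses $\norm{v}_{L^{q}}\geq \delta$ and $\norm{v}_{H^{s/2}}\leq 1/\delta$, the inequality \cref{eq:vanish} follows by simple rearrangement, with $\varepsilon$ equal to a positive power of $\delta$.

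To prove the displayed inequality, I would first localize. Setting $M := \sup_{j} \int_{I_{j}} \abs{v}^{q}\,dx$ with $I_{j}=(j-2,j+2)$, the elementary identity
\begin{equation*}
    \int_{I_{j}} \abs{v}^{q}\,dx = \left(\int_{I_{j}} \abs{v}^{q}\,dx\right)^{1-2/q}\left(\int_{I_{j}} \abs{v}^{q}\,dx\right)^{2/q}
\end{equation*}
yields $\int_{I_{j}} \abs{v}^{q}\,dx \leq M^{1-2/q}\,\norm{v}_{L^{q}(I_{j})}^{2}$; summing over $j\in \mathbb{Z}$ and using that the family $\{I_{j}\}$ covers $\mathbb{R}$ with bounded overlap gives $\norm{v}_{L^{q}(\mathbb{R})}^{q} \lesssim M^{1-2/q}\sum_{j}\norm{v}_{L^{q}(I_{j})}^{2}$. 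Next, the hypotheses on $q$ ensure that the Sobolev embedding $H^{\frac{s}{2}}\hookrightarrow L^{q}$ is strictly subcritical (failing only at $q=\infty$ when $s\geq 1$, and only at $q=\frac{2}{1-s}$ when $s<1$), so on each interval of fixed length four one has the uniform local embedding $\norm{v}_{L^{q}(I_{j})}\lesssim \norm{v}_{H^{\frac{s}{2}}(I_{j})}$ with constant independent of $j$ by translation invariance.

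The remaining step, which I expect to be the main obstacle, is the summability
\begin{equation*}
    \sum_{j\in \mathbb{Z}} \norm{v}_{H^{\frac{s}{2}}(I_{j})}^{2} \lesssim \norm{v}_{H^{\frac{s}{2}}(\mathbb{R})}^{2}.
\end{equation*}
For integer $s/2$ this is transparent from the definition, but for fractional $s/2$ the Slobodeckij seminorm is nonlocal and must be handled with care. I would introduce a smooth partition of unity $\chi_{j}=\chi(\cdot-j)$ with bounded overlap, replace the local norm by the global norm $\norm{\chi_{j}v}_{H^{\frac{s}{2}}(\mathbb{R})}$, and decompose
\begin{equation*}
    \Lambda^{\frac{s}{2}}(\chi_{j}v) = \chi_{j}\Lambda^{\frac{s}{2}}v + [\Lambda^{\frac{s}{2}},\chi_{j}]v.
\end{equation*}
The diagonal term sums by bounded overlap to a multiple of $\norm{\Lambda^{\frac{s}{2}}v}_{L^{2}}^{2}$; the commutator is, up to translation in $j$, a fixed pseudodifferential operator of order $\frac{s}{2}-1$ applied to $v$, so standard symbol estimates together with the translation invariance control $\sum_{j}\norm{[\Lambda^{\frac{s}{2}},\chi_{j}]v}_{L^{2}}^{2}$ by $\norm{v}_{H^{\frac{s}{2}}}^{2}$. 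Assembling the three pieces gives the displayed estimate, and the lemma follows by the elementary rearrangement indicated at the start.
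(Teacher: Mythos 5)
First, a point of reference: the paper does not prove this lemma at all --- it is quoted verbatim from \cite[Lemma 3.4]{arnesen2016} --- so the relevant comparison is with the proof in that reference, which your proposal essentially reconstructs. The chain \(\norm{v}_{L^q(\mathbb{R})}^q \lesssim M^{1-2/q}\sum_j \norm{v}_{L^q(I_j)}^2 \lesssim M^{1-2/q}\sum_j \norm{v}_{H^{s/2}(I_j)}^2 \lesssim M^{1-2/q}\norm{v}_{H^{s/2}(\mathbb{R})}^2\) with \(I_j=(j-2,j+2)\), followed by rearrangement using \(\norm{v}_{L^q}\geq\delta\) and \(\norm{v}_{H^{s/2}}\leq 1/\delta\), is exactly the standard Lions-type anti-vanishing argument, and your reading of the hypotheses on \(q\) (subcriticality of the local embedding \(H^{s/2}(I)\hookrightarrow L^q(I)\), uniformly in \(j\) by translation) is correct.

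The one step that does not close as written is the summability of the commutator terms, \(\sum_j\norm{[\Lambda^{s/2},\chi_j]v}_{L^2}^2\lesssim\norm{v}_{H^{s/2}}^2\). Translation invariance plus the fact that \([\Lambda^{s/2},\chi]\) has order \(\tfrac{s}{2}-1\) give only the uniform single-term bound \(\norm{[\Lambda^{s/2},\chi_j]v}_{L^2}\lesssim\norm{v}_{H^{s/2-1}}\), and summing infinitely many copies of a \(j\)-independent bound diverges; nothing in ``standard symbol estimates'' by itself produces the required decay in \(j\). To make this route work you must exploit that \([\Lambda^{s/2},\chi_j]\) is spatially localized near \(I_j\) up to rapidly decaying tails (a Schur/almost-orthogonality argument on the kernel \(K(x-y)(\chi_j(y)-\chi_j(x))\), using that the kernel of \(\Lambda^{s/2}\) is smooth and rapidly decaying away from the diagonal). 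A cleaner and more usual route to the same superadditivity estimate \(\sum_j\norm{v}_{H^{s/2}(I_j)}^2\lesssim\norm{v}_{H^{s/2}(\mathbb{R})}^2\) is the Gagliardo--Slobodeckij characterization: for \(0<t<1\) the local double integrals \(\iint_{I_j\times I_j}|v(x)-v(y)|^2|x-y|^{-1-2t}\,dx\,dy\) sum, by bounded overlap, to at most a fixed multiple of the integral over \(\{|x-y|<4\}\) and hence of the global seminorm, with \(t\geq 1\) handled by splitting off integer derivatives. With that substitution (or with the almost-orthogonality argument carried out) your proof is complete and yields an explicit \(\varepsilon\) as a positive power of \(\delta\).
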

In \cite{arnesen2016} it is then shown that \cref{eq:vanish} and \(\norm{v}_{H^{\frac{s}{2}}}^{-1}>\delta\) implies 
\begin{equation*}
    \sup_{j\in\mathbb{Z}}\int_{j-2}^{j+2}\abs{v}^2\,dx \geq \varepsilon,
\end{equation*}
which of course excludes vanishing when applied to \(u_n\).

In the classical case, \(\norm{u_n}_{H^{\frac{s}{2}}}<\infty\) follows easily from \(\Gamma_{\mu}<\mu\) and \cref{eq:L4_bound_upper} since
\begin{equation*}
    \norm{u_n}_{H^{\frac{s}{2}}}^2\eqsim \mathcal{L}(u_n) = \mathcal{\mathcal{E}}(u_n) + \mathcal{\mathcal{N}}(u_n) < \mu + \mu^{1-\gamma}\norm{u_n}_{H^{\frac{s}{2}}}^{2\gamma}, \quad \gamma<1.
\end{equation*}
Using {only that} \(\mathcal{L}(u)\geq \mu\), \(\Gamma_{\mu}< \mu\) one can also easily deduce that there is a \(\delta>0\) such that \(\mathcal{\mathcal{N}}(u_n) \geq \delta\), see e.g. \cite[Lemma 4.2]{arnesen2016}, passing to a subsequence if necessary. In the classical case, this is all that is needed to apply \cref{lem:vanish}, as {\(\norm{u_n}_{L^4}^4 \eqsim \mathcal{N}(u_n)\geq \delta \)} and \(q=4\) satisfies the assumptions (recall that \(s>1\) in the classical case).

{
For the modified equation \cref{eq:solitary_wave},  \(\norm{u_n}_{H^{\frac{s}{2}}}<\infty\) and \(\tilde{\mathcal{N}}\geq\delta\) hold by the same argument, since the arguments only relied on bounds on the functionals \(\mathcal{E}, \mathcal{L}, \mathcal{N}\) and \(\Gamma_{\mu}\). The same bounds hold for the modified versions, so one may simply replace \(\mathcal{E}, \mathcal{N}, \Gamma_{\mu}\) with \(\tilde{\mathcal{E}}, \tilde{\mathcal{N}}, \tilde{\Gamma}_{\mu}\) and obtain the same results.

When attempting to go from a lower bound on \(\tilde{\mathcal{N}}(u_{n})\) to a lower bound on \(\left\|u_{n} \right\|_{L^{q}}\),} the introduction of the nonlocal nonlinearity gives rise to two new problems. The first is that the estimate \(\norm{u_n}_{L^4}^4\gtrsim \tilde{\mathcal{N}}(u_n)\)
does not hold in general. The second, and bigger problem is that \(q=4\) only satisfies the assumptions of \cref{lem:vanish} when \(s>\frac{1}{2}\), while a sufficiently negative \(r\) allows the minimization problem to be well-defined for \(s>0\). {To overcome this, we need the following lemma.} 

{
\begin{lemma}
    Let \(u\in H^{\frac{s}{2}}(\mathbb{R})\). When \(0<r<s-1\),
    \begin{equation*}
        \norm{u^2}_{H^{\frac{r}{2}}}\leq \norm{u}_{L^4}^{2(1-\frac{r}{s})}\norm{u}_{H^{\frac{s}{2}}}^{\frac{2r}{s}}.
    \end{equation*}
    
    Furthermore, when \(s<1\), there is a \(q\) satisfying \(2<q<{\frac{2}{1-\frac{s}{2}}}\) such that 
    \begin{equation}
        \norm{u^2}_{H^{\frac{r}{2}}}\lesssim \norm{u}_{L^q}\norm{u}_{H^{\frac{s}{2}}}.\label{eq:lemq2}
    \end{equation}\label{lem:q}
\end{lemma}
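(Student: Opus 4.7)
The plan is to treat the two inequalities separately: for (i) via interpolation plus the algebra property of $H^{s/2}$, and for (ii) via duality combined with a three-factor Hölder argument.

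For part (i), where $0<r<s-1$ and hence $s>1$, I would interpolate the $H^{r/2}$ norm of $u^2$ between $L^2$ and $H^{s/2}$ on the Fourier side. Writing $(1+\xi^2)^{r/2}=\bigl((1+\xi^2)^{s/2}\bigr)^{r/s}$ and applying Hölder with exponents $s/r$ and $s/(s-r)$ gives
\begin{equation*}
\norm{u^2}_{H^{r/2}}\leq \norm{u^2}_{L^2}^{1-r/s}\,\norm{u^2}_{H^{s/2}}^{r/s}.
\end{equation*}
Then $\norm{u^2}_{L^2}=\norm{u}_{L^4}^2$, and the algebra property $\norm{u^2}_{H^{s/2}}\lesssim \norm{u}_{H^{s/2}}^2$ follows from \cref{prop:product_rule_hilbert}(i) with $t_1=t_2=s/2$, which is valid since $s/2>1/2$. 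Combining yields the bound.

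For part (ii), assume $s<1$ and, without loss of generality, $r\in(-1,s-1)$ (otherwise $\norm{u^2}_{H^{r/2}}\leq\norm{u^2}_{H^{\tilde r/2}}$ for any $\tilde r\in(-1,s-1)$). I would dualize,
\begin{equation*}
\norm{u^2}_{H^{r/2}}=\sup_{\norm{\phi}_{H^{-r/2}}\leq 1}\abs{\int u\cdot u\cdot\phi\,dx},
\end{equation*}
and bound the integrand by three-factor Hölder with exponents $q,A,B$ satisfying $1/q+1/A+1/B=1$. Pick $B=2/(1+r)$ so that the one-dimensional Sobolev embedding $H^{-r/2}\hookrightarrow L^B$ absorbs $\phi$ (valid since $-r/2\in(0,1/2)$), and $A\in[2,2/(1-s)]$ so that $H^{s/2}\hookrightarrow L^A$ absorbs one $u$-factor. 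The relation $1/q=1-1/A-1/B$ then forces $q\in[2/(s-r),-2/r]$; the lower endpoint lies below $2$ since $s-r>1$, while the upper endpoint lies above $2$ since $r>-1$. A choice $q\in(2,\min(-2/r,2/(1-s/2)))$ sits inside the target interval $(2,2/(1-s/2))$ and delivers the bound.

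The main obstacle is producing an estimate with $q>2$ while the right-hand side carries only first powers of $\norm{u}_{L^q}$ and $\norm{u}_{H^{s/2}}$. The naive route via $\norm{u^2}_{H^{r/2}}\leq\norm{u^2}_{L^2}=\norm{u}_{L^4}^2$ followed by Hölder forces $q\geq 2/s$, which exceeds $2/(1-s/2)$ whenever $s\leq 2/3$. The three-factor duality circumvents this by letting the Sobolev regularity of $\phi$ absorb a large $L^B$-exponent; this is precisely what frees $q$ to sit arbitrarily close to $2$ while keeping the bound linear in $\norm{u}_{H^{s/2}}$.
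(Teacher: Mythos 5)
Your part (i) is exactly the paper's argument: interpolate $\norm{u^2}_{H^{r/2}}\leq\norm{u^2}_{L^2}^{1-r/s}\norm{u^2}_{H^{s/2}}^{r/s}$ and invoke the algebra property of $H^{s/2}$ for $s>1$. Your part (ii) is correct but takes a genuinely different route. The paper first reduces to $\tilde r\in(\tfrac{s}{2}-1,s-1)$, embeds $L^p\hookrightarrow H^{\tilde r/2}$ with $p=2/(1-\tilde r)<2$, and then cites the product estimate of \cref{prop:product_rule_general} (Runst--Sickel) with $t=\tfrac{s}{2}$ to get $\norm{u^2}_{L^p}\lesssim\norm{u}_{L^q}\norm{u}_{H^{s/2}}$ for $q=2/(\tfrac{s}{2}-\tilde r)$. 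You instead reduce to $r\in(-1,s-1)$, dualize against $\phi\in H^{-r/2}$, and run a three-factor H\"older with $B=2/(1+r)=p'$ and $H^{-r/2}\hookrightarrow L^B$, $H^{s/2}\hookrightarrow L^A$; your exponent bookkeeping is right (the achievable range $q\in[2/(s-r),-2/r]$ straddles $2$, and $A$ near $-2/r$ stays strictly below the critical exponent $2/(1-s)$ because $r<s-1$), so a valid $q\in(2,2/(1-\tfrac{s}{2}))$ exists. In effect you prove the needed special case of \cref{prop:product_rule_general} by hand via duality, H\"older and Sobolev embedding, which makes the lemma self-contained at the cost of a slightly longer exponent chase; the paper's version is shorter but leans on the cited black box. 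Your closing observation that the naive route through $\norm{u}_{L^4}^2$ forces $q\geq 2/s$ and therefore fails for $s\leq 2/3$ is also correct and explains why the extra structure is needed. One minor point of hygiene: the duality identity should be taken as a supremum over Schwartz $\phi$ (dense in $H^{-r/2}$), with the pairing a priori defined because $u^2\in L^1$; finiteness of the supremum then yields $u^2\in H^{r/2}$ together with the bound.
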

\begin{proof}
    We show the estimate when \(0<r<s-1\) first. Interpolating, we find that
    \begin{equation*}
        \norm{u^2}_{H^{\frac{r}{2}}}\leq \norm{u^2}_{L^2}^{1-\frac{r}{s}}\norm{u^2}_{H^{\frac{s}{2}}}^{\frac{r}{s}}.
    \end{equation*}
    Since \(s>1\), \(H^{\frac{s}{2}}(\mathbb{R})\) is a Banach algebra. Hence \(
        \norm{u^2}_{H^{\frac{s}{2}}} \lesssim \norm{u}_{H^{\frac{s}{2}}}^2\),
    and we conclude that
    \begin{equation*}
        \norm{u^2}_{H^{\frac{r}{2}}}\leq \norm{u^2}_{L^2}^{1-\frac{r}{s}}\norm{u}_{H^{\frac{s}{2}}}^{\frac{2r}{s}} = \norm{u}_{L^4}^{2-\frac{2r}{s}}\norm{u}_{H^{\frac{s}{2}}}^{\frac{2r}{s}}.
    \end{equation*}

    The estimate when \(s<1\) is shown by a direct calculation, by appealing to Sobolev embedding and \cref{prop:product_rule_general}.
    If \(r>\frac{s}{2}-1\), set \(\tilde{r} = r\). Otherwise, pick \(\tilde{r}\in (\frac{s}{2}-1, s-1)\). Let 
    \begin{equation*}
        p = \frac{2}{1-\tilde{r}} \quad \text{and} \quad q = \frac{2}{\frac{s}{2}-\tilde{r}}.
    \end{equation*} 
    Since \(s<1<2\) and \(\tilde{r}<s-1\) the denominater \(\frac{s}{2}- \tilde{r}>0\). Furthermore, \(\tilde{r}>\frac{s}{2}-1\) guarantees that \(q>2\). Clearly also \(q<{\frac{2}{1-\frac{s}{2}}}\). By Sobolev embedding, valid since \(p< \frac{2}{2-\frac{s}{2}} < 2\) and \(\frac{\tilde{r}}{2} - \frac{1}{2} = - \frac{1}{p}\), 
    \begin{equation*}
        \norm{u^2}_{H^{\frac{r}{2}}}\lesssim \norm{u^2}_{H^{\frac{\tilde{r}}{2}}} \lesssim \norm{u^2}_{L^p}. 
    \end{equation*}
    {Now applying \cref{prop:product_rule_general} with \(t = \frac{s}{2}\) and \(p, q\) as above gives \cref{eq:lemq2}.}
\end{proof}}

{
When \(s\geq 1\), \(q=4\) satisfies the assumptions of \cref{lem:vanish}. In that case, we can bound \(\left\|u_{n} \right\|_{L^4}\) from below in terms of \(\tilde{\mathcal{N}}(u_{n})\) using the first part of \cref{lem:q} when \(r>0\) or the simpler estimate \(\norm{u^2}_{H^{\frac{r}{2}}}^2 \leq \norm{u_n}_{L^4}\) when \(r\leq 0\). When \(s<1\), we can instead use the second part of \cref{lem:q} to find a lower bound for \(\left\|u_{n} \right\|_{L^{q}}, 2<q<\frac{2}{1-\frac{s}{2}}<\frac{2}{1- s}\). In any case, we can conclude that there is a \(\delta>0\) and a suitable \(q\) such that \(\norm{u_n}_{L^q}\geq\delta\) when \(\tilde{\mathcal{N}}(u_n)\geq \tilde{\delta}>0\) and use \cref{lem:vanish} to conclude that vanishing does not occur.
}

\subsubsection{Excluding dichotomy}
The map \(\mu\mapsto \Gamma_{\mu}\) is strictly subadditive:
\begin{equation}
    \Gamma_{\mu_1 + \mu_2} <\Gamma_{\mu_1} + \Gamma_{\mu_2}.\label{eq:subadditivity}
\end{equation}
Subadditivity follows from subhomogeneity, \(\Gamma_{t\mu} <t\Gamma_{\mu}, t>1\) \cite[Lemma 3.3]{arnesen2016}, which in turn follows directly from a scaling argument since \(\mathcal{L}(u), \mathcal{N}(u)\) are both homogeneous and strictly greater than zero \cite[Lemma 4.3]{arnesen2016}. For \(\tilde{\Gamma}_{\mu}\), nothing changes. 

The idea is now to show that dichotomy contradicts \cref{eq:subadditivity}. Here, we follow \cite{maehlen2020}, where the assumptions on the linear, dispersive operator allow for a considerably simpler proof. If \(\{u_n^2\}_{n\in\mathbb{N}}\) dichotomizes, then it is possible to find sequences \(u_n^{(1)}, u_n^{(2)}\) with \({(u_n^{(1)})^2 + (u_n^{(2)})^2 = u_n^2}\) such that
\begin{equation}
    \mathcal{Q}(u_n^{(1)})\to\lambda,\quad \mathcal{Q}(u_n^{(2)})\to(\mu-\lambda), \quad \int_{\supp{u_n^{(1)}}\cap \,\supp{u_n^{(2)}}} u_n^2\,dx \to 0 \label{eq:commutator_L2norm}
\end{equation}
for some \(\lambda\in (0,\mu)\) as \(n\to \infty\) \cite[Proposition 7]{maehlen2020}. In particular, one can { pick two smooth functions \(\phi, \psi \colon \mathbb{R}\to [0,1]\)} with \({\phi(x)=1}\) when \(\abs{x}\leq 1\) and \(\phi(x)=0\) when \(\abs{x}\geq 2\) {and \(\phi^2 + \psi^2 = 1\)}, \sloppy{and find sequences \({\{x_n\}_{n\in \mathbb{N}}, \{R_n\}_{n\in \mathbb{N}}\subset \mathbb{R}}, {R_n \to \infty}\) such that \cref{eq:commutator_L2norm} holds for} \(u_n^{(1)} = \phi_n u_n\) and \(u_n^{(2)}  {= \psi u_{n}} =  \sqrt{1-\phi_n^2}u_n\), where \(\phi_n(x) = \phi(\frac{x-x_n}{R_n})\). We can assume without loss of generality that \(x_n = 0\) for all \(n\). 
Showing that \(\mathcal{E}(u_n)\) eventually also decomposes as  \(u_n^{(1)}, u_n^{(2)}\) separate in space, 
\begin{equation}
    \lim_{n\to \infty} (\mathcal{E}(u_n)) =  \lim_{n\to \infty} (\mathcal{E}(u_n^{(1)}) + \mathcal{E}(u_n^{(2)})),\label{eq:E_decompose}
\end{equation}
gives the desired contradiction, since then
\begin{equation}
    \Gamma_{\mu}= \liminf_{n\to\infty}\mathcal{E}(u_n) \geq \liminf_{n\to\infty}\mathcal{E}(u_n^{1})+\liminf_{n\to\infty}\mathcal{E}(u_n^{2})= \Gamma_{\bar{\mu}} + \Gamma_{(\mu-\bar{\mu})}.\label{eq:contradiction}
\end{equation}
In the classical case, \(\mathcal{N}(u_n)\) is local and the decomposition for this part is automatic,
\begin{equation}
    \mathcal{N}(u_n) \to \mathcal{N}(u_n^{(1)}) + \mathcal{N}(u_n^{(2)}) \quad\text{as}\quad n\to \infty.\label{eq:N_decompose}
\end{equation}
Since \(\Lambda^s\) is nonlocal, the same decomposition is not automatically true for \(\mathcal{L}(u_n)\). However, it holds because of the regularity of the symbol \(\jpb{\xi}^{s}\) (here, the symbol is smooth, but continuously differentiable suffices). {The key is the following commutator estimate.}
\begin{lemma}[\!\!{\cite[Lemma 6.1]{maehlen2020}}]
    Let \(u, v\in H^{\frac{s}{2}}(\mathbb{R})\) and let \(\rho\in \mathcal{S}(\mathbb{R})\) be a non-negative Schwartz function. Define \(\rho_R(x) = \rho(x/R)\). Then
    \begin{equation*}
    |\int_{\mathbb{R}}v (\rho_R \Lambda^s u - \Lambda^s(\rho_R u))\,dx| \to 0
    \end{equation*}
    as \(R\to \infty\). 
    \label{lem:commutator_L}
\end{lemma}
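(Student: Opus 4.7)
The plan is to pass to the Fourier side, extract a factor $|\xi - \eta|$ from $\jpb{\xi}^s - \jpb{\eta}^s$ using the smoothness of the symbol, and harvest the $R^{-1}$ decay from the concentration of $\widehat{\rho_R}(\zeta) = R\hat\rho(R\zeta)$ near $\zeta = 0$. Concretely, by Plancherel's theorem and the convolution identity $\widehat{\rho_R u} = \widehat{\rho_R}\ast \hat u$, the absolute value of the quantity to bound is controlled by
\[
\iint |\hat v(\xi)|\,|\widehat{\rho_R}(\xi - \eta)|\,|\jpb{\eta}^s - \jpb{\xi}^s|\,|\hat u(\eta)|\,d\eta\,d\xi,
\]
and the mean value theorem applied to $t\mapsto \jpb{t}^s$ gives
\[
|\jpb{\xi}^s - \jpb{\eta}^s| \;\lesssim\; |\xi - \eta|\bigl(\jpb{\xi}^{s-1} + \jpb{\eta}^{s-1} + 1\bigr),
\]
decomposing the integral into three pieces carrying weights $\jpb{\xi}^{s-1}$, $\jpb{\eta}^{s-1}$, and $1$, each multiplied by the kernel $|\xi - \eta||\widehat{\rho_R}(\xi - \eta)|$.

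For the piece with weight $\jpb{\xi}^{s-1}$ I write $\jpb{\xi}^{s-1} = \jpb{\xi}^{s/2}\jpb{\xi}^{s/2-1}$ and invoke Peetre's inequality $\jpb{\xi}^{s/2-1} \lesssim \jpb{\xi - \eta}^{|s/2-1|}\jpb{\eta}^{s/2-1}$ to transfer the surplus exponent onto the convolution kernel. After the crude bound $\jpb{\eta}^{s/2-1}\leq \jpb{\eta}^{s/2}$, Cauchy--Schwarz followed by Young's convolution inequality applied to $F(\xi) = \jpb{\xi}^{s/2}|\hat v(\xi)|$, $G(\eta) = \jpb{\eta}^{s/2}|\hat u(\eta)|$, and $K_R(\zeta) = |\zeta|\jpb{\zeta}^{|s/2-1|}|\widehat{\rho_R}(\zeta)|$ yields
\[
\|F\|_{L^2}\|K_R\|_{L^1}\|G\|_{L^2} \;=\; \|v\|_{H^{s/2}}\|u\|_{H^{s/2}}\|K_R\|_{L^1}.
\]
The $\jpb{\eta}^{s-1}$ piece is treated symmetrically by interchanging the roles of $\xi$ and $\eta$, and the constant piece by Cauchy--Schwarz with the simpler kernel $|\zeta||\widehat{\rho_R}(\zeta)|$.

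All of the $R$-decay is then concentrated in $\|K_R\|_{L^1}$: substituting $\tau = R\zeta$ and using $\widehat{\rho_R}(\zeta) = R\hat\rho(R\zeta)$,
\[
\|K_R\|_{L^1} \;=\; \frac{1}{R}\int |\tau|\jpb{\tau/R}^{|s/2-1|}|\hat\rho(\tau)|\,d\tau \;\lesssim\; \frac{1}{R}
\]
uniformly for $R\geq 1$, since $\hat\rho\in \mathcal{S}(\mathbb{R})$ makes every polynomial weight in $\tau$ integrable. Combining everything gives the original integral is $O(R^{-1})\|v\|_{H^{s/2}}\|u\|_{H^{s/2}}$, which vanishes as $R\to \infty$. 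The main obstacle I anticipate is the weight-transfer step: for $s>2$ the exponent $s-1$ exceeds the $H^{s/2}$-regularity available on $v$, so direct Cauchy--Schwarz on the weighted piece is insufficient. Peetre's inequality is precisely what permits absorbing the surplus regularity into the kernel without destroying the $R^{-1}$ gain, because any polynomial factor in $\tau$ remains integrable against the Schwartz function $\hat\rho$.
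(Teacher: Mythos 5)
Your proof is correct. Note first that the paper does not actually prove this lemma --- it is imported verbatim from \cite[Lemma 6.1]{maehlen2020} --- so the natural in-paper comparison is with the proof of the analogous \cref{lem:commutator_N}, and there your route is genuinely different. The paper splits the outer integration into $\abs{t}\geq R^{-1/2}$ and $\abs{t}<R^{-1/2}$, using the crude triangle-inequality bound on the symbol difference together with the Schwartz decay of $\hat\rho$ on the far region, and reserving the mean value theorem for the near region only. That splitting is forced there because $u^2,v^2$ lie merely in $H^{\frac{r}{2}}$ with $r$ possibly negative, so the weight $\jpb{\xi}^{r-1}$ produced by a global mean-value bound is only usable when $\abs{t}\leq 1$ ensures $\jpb{\xi}\eqsim\jpb{\xi-t}$, and no extra regularity is available to absorb a Peetre transfer at large $t$. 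In your setting the functions have the matching regularity $H^{\frac{s}{2}}$ for the symbol $\jpb{\xi}^s$, so the surplus exponent $\frac{s}{2}-1$ really can be shunted onto the convolution kernel by Peetre's inequality; each step you take (the mean-value bound with the $+1$ covering $s<1$, the crude bound $\jpb{\eta}^{\frac{s}{2}-1}\leq\jpb{\eta}^{\frac{s}{2}}$, Cauchy--Schwarz plus Young, and the uniform bound $\jpb{\tau/R}^{\abs{\frac{s}{2}-1}}\leq\jpb{\tau}^{\abs{\frac{s}{2}-1}}$ for $R\geq 1$) checks out. What your single global estimate buys is a quantitative rate $O(R^{-1})\norm{u}_{H^{\frac{s}{2}}}\norm{v}_{H^{\frac{s}{2}}}$ rather than mere convergence to zero; what the paper's splitting buys is robustness in the low-regularity regime where your weight-transfer step would fail. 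Your closing remark correctly identifies Peetre's inequality as the crux for $s>2$.
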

With this estimate, it is an easy matter, see e.g. \cite[Proposition 7]{maehlen2020}, to show that
\begin{equation}
    \mathcal{L}(u_n) \to \mathcal{L}(u_n^{(1)}) + \mathcal{L}(u_n^{(2)}) \quad\text{as}\quad n\to \infty.\label{eq:L_decompose}
\end{equation}

Since \(\tilde{\mathcal{N}}\) contains a nonlocal operator, \cref{eq:N_decompose} but for \(\tilde{\mathcal{N}}\) is not automatic. {We will show it using a similar strategy as for \(\mathcal{L}\), beginning with a version of \cref{lem:commutator_L} with \(\Lambda^r\) and \(u^2\).}  
Since \(r\) can be negative and \(u^2, v^2\) is not necessarily in \(H^{\frac{s}{2}}(\mathbb{R})\) or even \(L^2(\mathbb{R})\), the proof is more involved than for \cref{lem:commutator_L}.
{
\begin{lemma}
    Let \(u, v \in H^{\frac{s}{2}}(\mathbb{R})\) and let \(\rho\in \mathcal{S}(\mathbb{R})\) be a non-negative Schwartz function. Define \(\rho_R(x) = \rho(x/R)\). Then
    \begin{equation*}
    |\int_{\mathbb{R}}v^{2} (\rho_R \Lambda^r u^{2} - \Lambda^r(\rho_R u^{2}))\,dx| \to 0 \quad \text{as} \quad R\to \infty.
    \end{equation*} 
    \label{lem:commutator_N}
\end{lemma}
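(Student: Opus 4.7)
The plan is to work in Fourier space, as in \cite[Lemma 6.1]{maehlen2020}, but adapted to two new features: by \cref{lem:N_upper} we only have $u^{2}, v^{2}\in H^{r/2}(\mathbb{R})$, and for $r<0$ these functions need not lie in $L^{2}(\mathbb{R})$. Writing $m(\xi,\eta):=\jpb{\eta}^{r}-\jpb{\xi}^{r}$ for the commutator symbol, Plancherel gives
\[
I_{R} := \int_{\mathbb{R}} v^{2}\bigl(\rho_{R}\Lambda^{r}u^{2}-\Lambda^{r}(\rho_{R}u^{2})\bigr)\,dx = c\iint \overline{\widehat{v^{2}}(\xi)}\,m(\xi,\eta)\,\widehat{\rho_{R}}(\xi-\eta)\,\widehat{u^{2}}(\eta)\,d\xi\,d\eta.
\]
The rescaled kernel $\widehat{\rho_{R}}(\zeta)=R\hat{\rho}(R\zeta)$ concentrates at $\zeta=0$ as $R\to\infty$, where $m(\xi,\xi)=0$, so the integrand should vanish in the limit.

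The key algebraic identity that tames the negative-$r$ case is
\[
\frac{m(\xi,\eta)}{\jpb{\xi}^{r/2}\jpb{\eta}^{r/2}} = \biggl(\frac{\jpb{\eta}}{\jpb{\xi}}\biggr)^{r/2}-\biggl(\frac{\jpb{\xi}}{\jpb{\eta}}\biggr)^{r/2} = 2\sinh\!\Bigl(\tfrac{r}{2}\log\tfrac{\jpb{\eta}}{\jpb{\xi}}\Bigr).
\]
Together with the Peetre inequality $\max(\jpb{\eta}/\jpb{\xi},\jpb{\xi}/\jpb{\eta}) \leq \sqrt{2}\,\jpb{\xi-\eta}$, this yields the uniform pointwise bound
\[
\Bigl|\frac{m(\xi,\eta)}{\jpb{\xi}^{r/2}\jpb{\eta}^{r/2}}\Bigr|\leq C\jpb{\xi-\eta}^{|r|/2},
\]
which distributes the symbol across the weights needed to pair $\widehat{u^{2}}, \widehat{v^{2}}$ with $\norm{u^{2}}_{H^{r/2}}, \norm{v^{2}}_{H^{r/2}}$.

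Substituting $\tilde\zeta = R(\xi-\eta)$ isolates the $R$-dependence:
\[
I_{R} = c\int\hat{\rho}(\tilde\zeta)\,A_{R}(\tilde\zeta)\,d\tilde\zeta,\qquad A_{R}(\tilde\zeta):=\int \overline{\widehat{v^{2}}(\xi)}\,m\bigl(\xi,\xi-\tfrac{\tilde\zeta}{R}\bigr)\widehat{u^{2}}\bigl(\xi-\tfrac{\tilde\zeta}{R}\bigr)\,d\xi.
\]
Combining the symbol bound with Cauchy--Schwarz gives $|A_{R}(\tilde\zeta)|\leq C\jpb{\tilde\zeta}^{|r|/2}\norm{v^{2}}_{H^{r/2}}\norm{u^{2}}_{H^{r/2}}$ uniformly in $R\geq 1$, which is integrable against the Schwartz function $\hat{\rho}$. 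For each fixed $\tilde\zeta$, $m(\xi,\xi-\tilde\zeta/R)\to 0$ pointwise in $\xi$ as $R\to\infty$; combined with $L^{2}$-continuity of translation applied to $\widehat{u^{2}}\,\jpb{\cdot}^{r/2}\in L^{2}(\mathbb{R})$, a dominated convergence argument in $\xi$ yields $A_{R}(\tilde\zeta)\to 0$ at each $\tilde\zeta$. A second application of dominated convergence in $\tilde\zeta$ concludes that $I_{R}\to 0$.

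The main obstacle, and the place where the argument genuinely departs from \cite[Lemma 6.1]{maehlen2020}, is the low-regularity regime: for $r<0$, $u^{2}$ and $v^{2}$ lie only in $H^{r/2}$, not $L^{2}$. The $\sinh$ factorization is essential because it redistributes the weights so that the uniform symbol bound is compatible with the available regularity, making the same overall strategy as for \cref{lem:commutator_L} work.
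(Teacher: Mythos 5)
Your proof is correct, and while it shares the paper's overall framework (pass to the Fourier side, identify the commutator symbol $\jpb{\eta}^r-\jpb{\xi}^r$, and redistribute the weights via a Peetre-type inequality so that everything pairs against $\norm{u^2}_{H^{\frac{r}{2}}}\norm{v^2}_{H^{\frac{r}{2}}}$), the way you close the argument is genuinely different. The paper splits the convolution variable into $\abs{t}\geq R^{-1/2}$ and $\abs{t}<R^{-1/2}$: on the far region it uses exactly the coarse bound you package as the $\sinh$ identity plus $\jpb{\xi+t}^{r/2}\lesssim\jpb{\xi}^{r/2}\jpb{t}^{\abs{r}/2}$, and on the near region it refines the symbol difference via the mean value theorem to $\abs{t}\jpb{\xi}^{r-1}$, gaining an explicit factor $R^{-1}$; this yields a quantitative rate of decay. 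You instead rescale $\tilde\zeta=R(\xi-\eta)$ and run a soft double dominated-convergence argument, using pointwise vanishing of the symbol together with $L^2$-continuity of translation for $\jpb{\cdot}^{r/2}\widehat{u^2}$; your uniform majorant $\jpb{\tilde\zeta}^{\abs{r}/2}\hat\rho(\tilde\zeta)$ is integrable precisely because $\rho$ is Schwartz, so the argument is sound. What each buys: the paper's splitting gives an explicit rate and only ever uses elementary inequalities, at the cost of the slightly ad hoc $R^{-1/2}$ threshold and the extra (harmless) regularity $u^2\in H^{\frac{r-1}{2}}$; your version avoids the mean value theorem and any splitting, is arguably cleaner, but is purely qualitative. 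Do note that your dominated-convergence step in $\xi$ needs the small decomposition $\int FM_RG(\cdot-\tilde\zeta/R)=\int FM_R\bigl(G(\cdot-\tilde\zeta/R)-G\bigr)+\int FM_RG$ to separate the translation from the vanishing symbol --- you gesture at this with ``combined with $L^2$-continuity of translation,'' and it does work, but it is the one place where the write-up should be made explicit.
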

\begin{proof}
    By \cref{lem:N_upper}, \(u^2 \in H^{\frac{r}{2}}(\mathbb{R})\). Using Plancherel's and Fubini's theorems, 
    \begin{equation}
        \begin{split}
        &\abs{\int_{\mathbb{R}}v^{2} (\rho_R \Lambda^r u^{2} - \Lambda^r(\rho_R u^{2}))\,dx} \\
        &= \abs{\int_{\mathbb{R}} \overline{\widehat{v^2}}(\xi)((\widehat{\rho_R}\ast \jpb{\cdot}^r\widehat{u^2})(\xi)- \jpb{\xi}^{r} (\widehat{\rho_R}\ast \widehat{u^2})(\xi))\,d\xi}\\
        &\leq \int_{\mathbb{R}} \abs{\widehat{\rho_R}(t)}\int_{\mathbb{R}}\abs{\overline{\widehat{v^2}}(\xi)} \abs{\widehat{u^2}(\xi-t)}\underbrace{\abs{\jpb{\xi-t}^r - \jpb{\xi}^r}}_{D}\,d\xi\,dt \coloneqq I.
        \end{split}
        \label{eq:commutator_start}
    \end{equation}
    We split the domain of integration of the outer integral into \(\abs{t}\geq R^{-\frac{1}{2}}\) and \({\abs{t}< R^{-\frac{1}{2}}}\) and show that each part approaches zero as \(R\to \infty\). For the first part, \(\abs{t}\geq R^{-\frac{1}{2}}\), we apply the triangle inequality for a coarse upper bound for the difference \(D\): \({\abs{\jpb{\xi-t}^r - \jpb{\xi}}\leq \jpb{\xi-t}^r + \jpb{\xi}^r}.\)
    Observe that \(\jpb{\xi+t}^{\frac{r}{2}}\lesssim \jpb{\xi}^{\frac{r}{2}}\jpb{t}^{\frac{\abs{r}}{2}}\),
    so that \(\norm{u(\cdot-t)}_{H^{\frac{r}{2}}}\lesssim \jpb{t}^{\frac{\abs{r}}{2}}\norm{u}_{H^{\frac{r}{2}}}\).
    Combining these estimates, we find that
    \begin{equation}
        \begin{split}
            &\int_{\abs{t}\geq R^{-\frac{1}{2}}} \abs{\widehat{\rho_R}(t)}\int_{\mathbb{R}} \abs{\overline{\widehat{v^2}}(\xi)} \abs{\widehat{u^2}(\xi-t)}\abs{\jpb{\xi-t}^r - \jpb{\xi}^r}\,d\xi\,dt\\
        &\,\,\leq \int_{\abs{t}\geq R^{-\frac{1}{2}}} \abs{\widehat{\rho_R}(t)}\int_{\mathbb{R}}\abs{\overline{\widehat{v^2}}(\xi)\jpb{\xi-t}^{\frac{r}{2}}} \abs{\widehat{u^2}(\xi-t)\jpb{\xi-t}^{\frac{r}{2}}}+\abs{\overline{\widehat{v^2}}(\xi)\jpb{\xi}^{\frac{r}{2}}} \abs{\widehat{u^2}(\xi-t)\jpb{\xi}^{\frac{r}{2}}}\,d\xi\,dt\\
        &\,\,\lesssim \norm{v^2}_{H^{\frac{r}{2}}}\norm{u^2}_{H^{\frac{r}{2}}}\int_{\abs{t}\geq R^{-\frac{1}{2}}}\jpb{t}^{\frac{\abs{r}}{2}} R\abs{\widehat{\rho}(Rt)}\,dt= \norm{v^2}_{H^{\frac{r}{2}}}\norm{u^2}_{H^{\frac{r}{2}}}\int_{\abs{t}\geq R^{\frac{1}{2}}}\jpb{\frac{t}{R}}^{\frac{\abs{r}}{2}} \abs{\widehat{\rho}(t)}\,dt.
        \end{split}
        \label{eq:est_large_t}
    \end{equation}
    Since \(\rho\in \mathcal{S}(\mathbb{R})\), this expression approaches zero as \(R\to \infty\). For the second part, \(\abs{t}<R^{-\frac{1}{2}}\), we can assume that \(\abs{t}\leq 1\) since \(R^{-\frac{1}{2}}\to 0\) as \(R\to \infty\). With the mean value theorem, we find a refined upper bound for \(D\):
    \begin{equation*}
        \abs{\jpb{\xi-t}^r - \jpb{\xi}^r} \leq \sup_{\theta \in (\min(\xi-t, \xi), \max(\xi-t, t))} \abs{t} \jpb{\theta}^{r-1} \lesssim \abs{t}\jpb{\xi}^{r-1}.
    \end{equation*} 
    Thus
    \begin{equation}
        \begin{split}
        &\int_{\abs{t}< R^{-\frac{1}{2}}} \abs{\widehat{\rho_R}(t)}\int_{\mathbb{R}} \abs{\overline{\widehat{v^2}}(\xi)} \abs{\widehat{u^2}(\xi-t)}\abs{\jpb{\xi-t}^r - \jpb{\xi}^r}\,d\xi\,dt\\
        &\quad\leq \int_{\abs{t}< R^{-\frac{1}{2}}} R\abs{\widehat{\rho}(Rt)}\abs{t}\int_{\mathbb{R}} \abs{\overline{\widehat{v^2}}(\xi)\jpb{\xi}^{\frac{r-1}{2}}} \abs{\widehat{u^2}(\xi-t)\jpb{\xi}^{\frac{r-1}{2}}}\,d\xi\,dt\\
        &\quad\lesssim \norm{v^2}_{H^{\frac{r-1}{2}}}\norm{u^2}_{H^{\frac{r-1}{2}}}\frac{1}{R}\int_{\mathbb{R}} \abs{\widehat{\rho}(t)}\abs{t}\,dt
        \end{split}
        \label{eq:est_small_t}
    \end{equation}
    which also approaches zero as \(R\to \infty\).
    Combining \cref{eq:est_large_t,eq:est_small_t} with \cref{eq:commutator_start}, clearly \(I\to 0\) as \(R\to \infty\).
\end{proof}}

{ With this lemma, we can show that \(\mathcal{N}(u_{n})\) decomposes as \(n\to\infty\), that is, \cref{eq:N_decompose} holds for \(\tilde{\mathcal{N}}\).
} Since \({(u_n^{(1)})^2 + (u_n^{(2)})^2 = u_n^2}\), 
\begin{equation*}
    \begin{split}
        \tilde{\mathcal{N}}(u_n) &= \tilde{\mathcal{N}}(u_n^{(1)}) + \tilde{\mathcal{N}}(u_n^{(2)}) + \frac{1}{2}\int_{\mathbb{R}} (1-\phi_n^2) u_n^2 \Lambda^r \phi_n^2u_n^2\,dx,
    \end{split}
\end{equation*}
and we will show that the last term approaches zero as \(n\to \infty\). First, observe that {\cref{lem:commutator_N}} still holds if we replace {\(\rho_{R}\)} with {\(1-\rho_{R}\)} since
\begin{equation*}
    \int_{\mathbb{R}}v^{2} ((1-\rho_R) \Lambda^r u^{2} - \Lambda^r((1-\rho_R) u^{2}))\,dx = -\!\!\int_{\mathbb{R}}v^{2} (\rho_R \Lambda^r u^{2} - \Lambda^r(\rho_R u^{2}))\,dx.
\end{equation*}
Applying this and {\cref{lem:commutator_N}} with \(\phi^2\) as \(\rho\) and \(\sqrt{1-\phi^2}\) as \(1-\rho\) respectively, we find that 
\begin{equation*}
    \lim_{n\to\infty}\int_{\mathbb{R}} (1-\phi_n^2) u_n^2 \Lambda^r \phi_n^2u_n^2\,dx= \lim_{n\to\infty}\tilde{\mathcal{N}}(\phi_n \sqrt{1-\phi_n^2}u_n).
\end{equation*}
By \cref{eq:N_upper}, for some \(\gamma<2\),
\begin{equation*}
    \tilde{\mathcal{N}}(\phi_n \sqrt{1-\phi_n^2}u_n) \lesssim \norm{\phi_n \sqrt{1-\phi_n^2}u_n}_{L^2}^{4-2\gamma}\norm{\phi_n \sqrt{1-\phi_n^2}u_n}_{H^{\frac{s}{2}}} \to 0
\end{equation*}
where the limit as \(n\to \infty\) follows from \cref{eq:commutator_L2norm}. {Thus we conclude that
\begin{equation*}
    \tilde{\mathcal{N}}(u_n) \to \tilde{\mathcal{N}}(u_n^{(1)}) + \tilde{\mathcal{N}}(u_n^{(2)}) \quad\text{as}\quad n\to \infty.
\end{equation*}
Combining this with \cref{eq:L_decompose}, we see that \cref{eq:E_decompose} and \cref{eq:contradiction} must hold for \(\tilde{\mathcal{E}}, \tilde{\Gamma}_{\mu}\) and we have the desired contradiction.}

\subsubsection{Convergence from concentration}
Having excluded vanishing and dichotomy, we conclude that a subsequence of \(\{\frac{1}{2}u_n^2\}_{n\in\mathbb{N}}\) \emph{concentrates}. We can then show that it converges in \(H^{\frac{s}{2}}(\mathbb{R})\) to a minimizer of \(\Gamma_{\mu}\). Since a minimizing sequence of \(\Gamma_{\mu}\) is bounded in \(H^{\frac{s}{2}}(\mathbb{R})\), it converges weakly -- up to a subsequence -- in \(H^{\frac{s}{2}}(\mathbb{R})\) to some \(\omega \in H^{\frac{s}{2}}(\mathbb{R})\). 
From this, one can use standard arguments to show that \(\{u_n\}_{n\in\mathbb{R}}\) converges -- up to subsequences and translations -- in \(L^2(\mathbb{R})\) to \(\omega\), see e.g. \cite[Lemma 4.6]{arnesen2016} or \cite[Proposition 8]{maehlen2020}. In the classical case, \cref{eq:L4_bound_upper} tells us that \(u_n \to \omega\) in \(L^4(\mathbb{R})\) as well so that \(\mathcal{N}(u_n)\to\mathcal{N}(\omega)\). Using this and the weak {lower} semi-continuity of the \(H^{\frac{s}{2}}(\mathbb{R})\)-norm,
\begin{equation*}
    \Gamma_{\mu}\leq \mathcal{L}(\omega) - \mathcal{N}(\omega) \leq \liminf_{n\to\infty} (\mathcal{L}(u_n) - \mathcal{N}(u_n)) = \Gamma_{\mu}.
\end{equation*}
Thus \(\mathcal{E}(u_n)\to \mathcal{E}(\omega)\). Since \(\mathcal{N}(u_n)\to\mathcal{N}(\omega)\), then \(\mathcal{L}(u_n)\to \mathcal{L}(\omega)\), which together with the weak \(H^{\frac{s}{2}}\)-convergence implies that \(u_n\to \omega\) in \(H^{\frac{s}{2}}(\mathbb{R})\).

For \(\tilde{\mathcal{N}}(u)\), \cref{eq:L4_bound_upper} does not necessarily hold, so we need to show that \(\tilde{\mathcal{N}}(u_n) \to \tilde{\mathcal{N}}(\omega)\) if \(u_n\to \omega\) in \(L^2(\mathbb{R})\). Since \({\tilde{\mathcal{N}}(u) \eqsim \norm{u^2}_{H^{\frac{r}{2}}}^{2}}\), it suffices to show that \(u_n^2 \to \omega^2\) in \(H^{\frac{r}{2}}(\mathbb{R})\). 
{
\begin{lemma}
    Let \(u, v \in H^{\frac{s}{2}}\). Then there exists \(0<\alpha\leq 1\) such that
    \[
    \left\|u^{2}-v^{2} \right\|_{H^{\frac{r}{2}}} \lesssim \left\|u-v \right\|_{L^{2}}^{\alpha} \left\|u-v \right\|_{H^{\frac{s}{2}}}^{1-\alpha} \left\|u+v \right\|_{H^{\frac{s}{2}}}
    .\] \label{lem:conv}
\end{lemma}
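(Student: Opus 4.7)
The plan is to copy the strategy of \cref{lem:N_upper} almost verbatim, after factoring \(u^2-v^2=(u-v)(u+v)\). Morally, \(u^2-v^2\) is bilinear in \((u-v)\) and \((u+v)\), so the same product and interpolation estimates that produced \cref{lem:N_upper} should yield a bilinear analogue in which the \(L^2\)--\(H^{\frac{s}{2}}\) interpolation falls on the factor \(u-v\) and the full \(H^{\frac{s}{2}}\) norm on the factor \(u+v\). All the hard work has already been done in \cref{prop:product_rule_hilbert} and in the proof of \cref{lem:N_upper}; the only new content is to distribute the two norms in the product estimate so that \(u-v\) carries the interpolated factor.

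First I would split into the cases \(s>1\) and \(s\leq 1\) exactly as in \cref{lem:N_upper}. For \(s>1\) with \(r>0\), I pick \(\tau\in(1,s-r)\) and apply \cref{prop:product_rule_hilbert}(i) to the product \((u-v)(u+v)\) with \(t_1=\frac{r}{2}\) on \(u-v\) and \(t_2=\frac{\tau}{2}\) on \(u+v\), obtaining
\[
\norm{u^2-v^2}_{H^{\frac{r}{2}}} \lesssim \norm{u-v}_{H^{\frac{r}{2}}}\norm{u+v}_{H^{\frac{\tau}{2}}} \leq \norm{u-v}_{H^{\frac{r}{2}}}\norm{u+v}_{H^{\frac{s}{2}}},
\]
where the final step uses \(\tau<s\). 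Interpolation then yields \(\norm{u-v}_{H^{\frac{r}{2}}}\leq \norm{u-v}_{L^2}^{1-\frac{r}{s}}\norm{u-v}_{H^{\frac{s}{2}}}^{\frac{r}{s}}\), hence the statement with \(\alpha=1-\frac{r}{s}\in(0,1)\). The subcases \(r\leq 0\) and \(s=1\) reduce to this by the same monotone-norm tricks used in \cref{lem:N_upper}: replace \(r\) by \(\tilde r\in(0,s-1)\) (when \(r\leq 0\)) or \(s\) by \(\tilde s\in(r+1,1)\) (when \(s=1\)), using that the Bessel norms are monotone in the regularity index so that the left-hand side decreases and the right-hand side increases under these swaps.

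For \(s<1\) (after reducing \(r\) to the range \(r>-1\) by the same device if necessary), I would apply \cref{prop:product_rule_hilbert}(ii) with \(t_1=0\) on \(u-v\) and \(t_2=\frac{r+1}{2}\) on \(u+v\). Since \(\frac{r+1}{2}<\frac{s}{2}\), this immediately gives
\[
\norm{u^2-v^2}_{H^{\frac{r}{2}}} \lesssim \norm{u-v}_{L^2}\norm{u+v}_{H^{\frac{r+1}{2}}} \leq \norm{u-v}_{L^2}\norm{u+v}_{H^{\frac{s}{2}}},
\]
which is the statement with \(\alpha=1\).

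I do not expect any essential obstacle beyond the bookkeeping of parameter ranges; the factorization \(u^2-v^2=(u-v)(u+v)\) turns the estimate into a bilinear analogue of \cref{lem:N_upper}, and the validity of the product rules in \cref{prop:product_rule_hilbert} for the needed parameters is precisely what was checked there. The only small subtlety is that the statement insists on placing the full \(H^{\frac{s}{2}}\) norm on the \(u+v\) factor, which dictates that we always choose the smaller Sobolev index \(t_1\) for the \(u-v\) slot so that interpolation between \(L^2\) and \(H^{\frac{s}{2}}\) is non-trivial.
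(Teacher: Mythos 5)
Your proposal is correct and follows essentially the same route as the paper: the paper also factors \(u^{2}-v^{2}=(u-v)(u+v)\), applies \cref{prop:product_rule_hilbert} exactly as in the proof of \cref{lem:N_upper} with the lower regularity index \(t_1\) placed on the factor \(u-v\), and then interpolates only that factor between \(L^{2}\) and \(H^{\frac{s}{2}}\). Your write-up simply makes explicit the case distinctions (\(s>1\) versus \(s\leq 1\), the reductions in \(r\) and \(s\)) that the paper delegates to the proof of \cref{lem:N_upper}.
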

\begin{proof}
 For \(\bar{u},\bar{v}\in H^{\frac{s}{2}}( \mathbb{R})\), there is always \(0<\alpha\leq 1\) such that
    \begin{equation*}
        \norm{\bar{u}\bar{v}}_{H^{\frac{r}{2}}}\lesssim \norm{\bar{u}}_{L^2}^{\alpha}\norm{\bar{u}}_{H^{\frac{s}{2}}}^{1-\alpha}\norm{\bar{v}}_{H^{\frac{s}{2}}}.
    \end{equation*}
    {
    This can be seen by applying \cref{prop:product_rule_hilbert} to \(\bar{u}\bar{v}\) in the same manner as in the proof of \cref{lem:N_upper}, using the lowest regularity \(t_1\) for \(\bar{u}\) and then interpolating only that part. }

    Noting that \(u^2 - v^2= (u - v)(u + v)\), we can apply the above equation to \(\bar{u} = u-v, \bar{v} = u+v\) to obtain the result.
\end{proof}

 Since \(u_{n}, \omega \in H^{\frac{s}{2}}( \mathbb{R})\), 
 \cref{lem:conv} and the already established convergence in \(L^2(\mathbb{R})\) implies that \(u_{n}^2\) converges strongly to \(\omega^2\) in \(H^{\frac{r}{2}}( \mathbb{R})\).
 }{
 Since a minimizing sequence for \(\tilde{\Gamma}_{\mu}\) satisfies the same properties as a minimizing sequence for \(\Gamma_{\mu}\), the other arguments are unchanged. This concludes the proof of existence of solitary-waves solutions to \cref{eq:solitary_wave} for any \(\mu>0\).}

\subsection{Wave speed and regularity { for \(\mu>0\)}} \label{sec:wave_speed_reg}
We estimate the wave speed and regularity for minimizers of \(\tilde{\Gamma}_{\mu}\) directly, based on similar results in \cite{maehlen2020,arnesen2016}. We show that the wave speed \(\nu\) is subcritical by multiplying \cref{eq:solitary_wave} with the solution \(u\){, integrating} and reshuffling: 
\begin{equation}
    \nu = {\frac{ \int_{ \mathbb{R}} u\Lambda^su \,dx - \int_{ \mathbb{R}} u^2 \Lambda^r u^2 \, dx}{ \int_{ \mathbb{R}} u^2 \, dx}} = \frac{\tilde{\mathcal{E}}(u)- \tilde{\mathcal{N}}(u)}{\mathcal{Q}(u)} < \frac{\tilde{\mathcal{E}}(u)}{\mathcal{Q}(u)} <1. \label{eq:nu}
\end{equation}
Here, we used that \(\tilde{\mathcal{N}}(u)>0\) and \(\tilde{\Gamma}_{\mu} = \tilde{\mathcal{E}}(u) <\mu\). 

A consequence of this is that the operator \((\Lambda^s-\nu)\colon H^{t}(\mathbb{R})\to H^{t - s}(\mathbb{R})\) is invertible and we can write \cref{eq:solitary_wave} as 
\begin{equation}
    u = (\Lambda^s-\nu)^{-1}(u\Lambda^r u^2).\label{eq:bootstrap}
\end{equation}
We only know that the solution \(u\) is in \(H^{\frac{s}{2}}(\mathbb{R})\), but using the formulation above we can show that the solution inherits regularity from the equation itself. {We need the following lemma.}  

{
\begin{lemma}
    Let \(t\geq \frac{s}{2}\) and suppose \(u\in H^{t}(\mathbb{R})\). Then \({u\Lambda^r u^2 \in H^{\min(t, t-r)}(\mathbb{R})}\). \label{lem:regularity}
\end{lemma}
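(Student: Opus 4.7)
The overall strategy is to apply the fractional-Sobolev product estimate of \cref{prop:product_rule_hilbert} twice. Set $\sigma := \min(t, t-r) = t - \max(0,r)$. I write $u\Lambda^r u^2$ as the product of two factors and observe that, by the very definition of $\Lambda^r$,
\[
\|\Lambda^r u^2\|_{H^{\sigma'}} = \|u^2\|_{H^{\sigma' + r}}
\qquad\text{for any } \sigma'\in\mathbb{R},
\]
so the task of bounding $\|\Lambda^r u^2\|_{H^{t-r}}$ reduces to bounding $\|u^2\|_{H^{t}}$. The plan is therefore: (i) show $u^2 \in H^t$, so that $\Lambda^r u^2 \in H^{t-r}$; and (ii) control the outer product $u \cdot \Lambda^r u^2$ at the regularity $\sigma$ using a second product estimate. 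For the latter, note that $t\geq \sigma$ and $t-r\geq \sigma$, so after step (i) both factors live in a space at least as regular as $H^\sigma$, which is the natural target for a product estimate.

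I split by whether $H^t$ is a Banach algebra. When $t>\tfrac{1}{2}$ (which is automatic when $s>1$, since $t\geq s/2$), part (i) of \cref{prop:product_rule_hilbert} gives directly $\|u^2\|_{H^t}\lesssim \|u\|_{H^t}^2$ and, again by the same proposition applied to the pair $(u,\Lambda^r u^2)\in H^t\times H^{t-r}$, yields $\|u\Lambda^r u^2\|_{H^\sigma}\lesssim \|u\|_{H^t}\|\Lambda^r u^2\|_{H^{t-r}}\lesssim \|u\|_{H^t}^3$. Here the constraint $r<s-1\leq 2t-1$ guarantees $t-r>1-t>-t$, so the product indices remain in the regime where part (i) applies.

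When $t\leq \tfrac{1}{2}$ (so necessarily $s\leq 1$), the Banach-algebra shortcut fails and I instead rely on part (ii) of \cref{prop:product_rule_hilbert} in the same spirit as in the proof of \cref{lem:N_upper}: choose exponents $t_1, t_2$ so that one factor is taken in $L^2$ (or low-regularity) and the other in the highest available Sobolev norm, with $t_1+t_2$ matching the target regularity $\sigma$. Concretely, I would pick parameters analogous to the $s<1$ branch of \cref{lem:N_upper}, using the strict inequality $r<s-1\leq 2t-1$ to create room. Applying this twice — first to estimate $\|u^2\|_{H^{\beta}}$ for an appropriate $\beta\geq \sigma+r$, and then to the outer product — yields the stated bound.

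The main obstacle is the bookkeeping of indices in the low-regularity case $t\leq \tfrac{1}{2}$, where the available product estimates have strict interval constraints. In particular, one must verify that the candidate pair $(t_1,t_2)$ simultaneously (a) fits the hypotheses of \cref{prop:product_rule_hilbert}(ii), (b) satisfies $t_1+t_2 \geq \sigma$, and (c) is compatible with the regularity $\Lambda^r u^2\in H^{\sigma_0}$ obtained from step (i). The strict inequality $r<s-1$ is precisely what provides a non-empty window of admissible indices; loss of this inequality would close the window, which is consistent with the remark in the paper that $s>r+1$ cannot be relaxed. Once the indices are chosen, the estimate itself is routine.
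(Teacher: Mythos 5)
Your overall strategy coincides with the paper's: split on whether $H^t(\mathbb{R})$ is a Banach algebra, estimate $u^2$ first, commute the gain/loss of $r$ derivatives through $\Lambda^r$, and finish with a second product estimate. The case $t>\tfrac12$ is carried out correctly and matches the paper (for $r\le 0$ one does not even need the second product estimate, since then $\Lambda^r u^2\in H^{t-r}\subset H^t$ and $H^t$ is an algebra; for $r>0$ your application of \cref{prop:product_rule_hilbert}(i) with $t_1=t-r$, $t_2=t$ is exactly the paper's step).

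The low-regularity branch, however, is not closed as you describe it, and the gap is in the outer product. You propose to apply \cref{prop:product_rule_hilbert}(ii) twice. The first application, with $t_1=t_2=t$, indeed gives $u^2\in H^{2t-\frac12}(\mathbb{R})$ and hence $\Lambda^r u^2\in H^{2t-\frac12-r}(\mathbb{R})$. But a second application of part (ii) to the pair $(u,\Lambda^r u^2)$ cannot reach the target: part (ii) requires $t_2<\tfrac12$ and only yields $H^{t_1+t_2-\frac12}$, which is strictly below $H^{t}$ whenever $t_2<\tfrac12$. The observation that rescues the argument --- and which your plan is missing --- is that the exponent $2t-\tfrac12-r$ is in fact \emph{larger} than $\tfrac12$: since $r<s-1$ and $s\le 2t$, one has $2t-\tfrac12-r>2t-\tfrac12-(2t-1)=\tfrac12>t$. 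So $\Lambda^r u^2$ lands above the algebra threshold, and the outer product is handled by part (i) (with $t_1=t$, $t_2=2t-\tfrac12-r$), giving $u\Lambda^r u^2\in H^t(\mathbb{R})$. Note also that in this regime $r<s-1\le 2t-1\le 0$, so $\min(t,t-r)=t$ automatically, and the borderline case $t=\tfrac12$ is disposed of by replacing $t$ with some $\tilde t\in(\tfrac{s}{2},t)$. With that one computation inserted, your plan becomes the paper's proof; without it, the second step as literally stated would fail.
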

\begin{proof}
    Suppose first that \(t>\frac{1}{2}\). Then \(H^t(\mathbb{R})\) is a Banach algebra and \(u^2\in H^{t}(\mathbb{R})\). This implies in turn that \(\Lambda^r u^2\in H^{t-r}(\mathbb{R})\). If \(r\leq 0\), then \(t-r\geq t\) and we conclude that \(u\Lambda^r u^2\in H^{t}(\mathbb{R})\). If on the other hand \(r>0\), we use \cref{prop:product_rule_hilbert} with \(t_1 = t-r, t_2 = t\) to conclude that \(u\Lambda^r u^2 \in H^{t-r}(\mathbb{R})\). 

    If \(t<\frac{1}{2}\), then \cref{prop:product_rule_hilbert} with \(t_1 = t_2 = t\) implies that \(u^2 \in H^{2t-\frac{1}{2}}(\mathbb{R})\). Then \(\Lambda^r u^2 \in H^{2t-\frac{1}{2}-r}(\mathbb{R})\). Since by assumption \(s\leq t\), then \(2t - \frac{1}{2} - r> 2t - s+1-\frac{1}{2} \geq \frac{1}{2}>t\) and \(u\Lambda^r u^2 \in H^{t}(\mathbb{R})\). Finally, if \(t= \frac{1}{2}\), pick \(\frac{s}{2}<\tilde{t}<t\) and do as above. 
\end{proof}
}

Now assume that \({u\in H^{t}(\mathbb{R})}, {t\geq \frac{s}{2}}\). {Then by the above lemma,} \(u\Lambda^r u^2 \in H^{\min(t, t-r)}(\mathbb{R})\) which implies that \(u\in H^{\min(t, t-r) + s}(\mathbb{R})\) due to \cref{eq:bootstrap}. Since \(s\) is positive and larger than \(r\), this gives an improved regularity estimate for \(u\). We start with \(t_0= \frac{s}{2}\) and find \(u\in H^{t_1}(\mathbb{R}), t_1 = \min(t_0, t_0-r) + s\), then \(u\in H^{t_2}(\mathbb{R}), t_2 = \min(t_1, t_1-r) + s\) and so on. Repeating this procedure gives increasingly better regularity estimates for \(u\in H^{\frac{s}{2}}\). Since we can continue indefinitely, we conclude that \(u\in H^{\infty}(\mathbb{R}) = \cap_{t\in \mathbb{R}}H^{t}(\mathbb{R})\).

{ With this, we have shown the first part of \cref{thm:main} which holds for all \(\mu>0\).} 

{
\subsection{Estimates for norms and wave speed when \(\mu \in (0,\mu_{0})\)}
It remains to show the estimates in \cref{eq:small_solutions_est}, which hold uniformly in \(\mu \in (0,\mu_{0})\) for any fixed \(\mu_{0}>0\). The argument is similar to that in Propositions 4, 9, and 10 in \cite{maehlen2020}. In the following, we fix \(\mu_{0}>0\). 

To show that \(\left\|u \right\|_{H^{\frac{s}{2}}}\eqsim \mu^{\frac{1}{2}}\), we first find a crude upper bound for \(\tilde{\mathcal{N}}(u)\). Using \cref{eq:N_upper} and \cref{lem:infimum_gamma},
\begin{equation*}
    \tilde{\mathcal{N}}(u) \lesssim \mu^{2- \gamma} \left\|u \right\|_{H^{\frac{s}{2}}}^{2\gamma}\leq \mu^{2- \gamma}(\tilde{\mathcal{N}}(u) + \mu)^{\gamma}\lesssim \mu^{2} + \mu^{2-\gamma}\tilde{\mathcal{N}}(u)^{\gamma},
\end{equation*}
for some \(\gamma\in (0,1)\). This implies that
\begin{equation}
    \tilde{\mathcal{N}}(u)\lesssim \mu^2, \quad \text{and} \quad\mu \lesssim \left\|u \right\|_{H^{\frac{s}{2}}}^{2} \eqsim \mathcal{L}(u)\leq \mu + \tilde{\mathcal{N}}(u) \lesssim \mu.\label{eq:Hest}
\end{equation} 
for \(\mu \in (0,\mu_{0})\) with implicit constants depending on \(\mu_{0}\).

We proceed to show the estimate of the wave speed in \cref{eq:small_solutions_est}. To do so, we need an improved estimate for \(\tilde{\mathcal{N}}(u)\) in terms of \(\mu\). As in \cite{maehlen2020}, we find an improved upper bound for the infimum \(\tilde{\Gamma}_{\mu}\) and decompose \(u\) into high- and low-frequency components to estimate \(\tilde{\mathcal{N}}(u)\) precicely. 
\begin{lemma} Let \(\mu_{0}>0\) and \(\mu \in (0, \mu_{0})\). A minimizer \(u\) of \(\tilde{\Gamma}_{\mu}\) satisfies
    \begin{equation*}
        \tilde{\mathcal{N}}\eqsim \mu^3.
    \end{equation*} \label{lem:N_improved}
\end{lemma}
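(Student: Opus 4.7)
The plan is to obtain the two bounds separately, following the strategy of Propositions 9 and 10 in \cite{maehlen2020} and adapting it to the nonlocal nonlinearity $\tilde{\mathcal{N}}$.

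For the lower bound $\tilde{\mathcal{N}}(u) \gtrsim \mu^3$, the first step is to sharpen \cref{lem:infimum_gamma} to $\tilde{\Gamma}_{\mu} \leq \mu - c\mu^3$. I pick $\phi \in \mathcal{S}(\mathbb{R})$ with $\norm{\phi}_{L^2}^{2} = 1$ and set $v(x) = \sqrt{2\mu\theta}\,\phi(\theta x)$, so that $\mathcal{Q}(v) = \mu$. A calculation identical to that in the proof of \cref{lem:infimum_gamma}, but keeping track of the $\mu$-dependence, gives
\begin{equation*}
\mathcal{L}(v) \leq \mu + C_{1}\mu\theta^{\min(s,2)} \quad \text{and} \quad \tilde{\mathcal{N}}(v) \geq C_{2}\mu^{2}\theta
\end{equation*}
uniformly for small $\theta>0$. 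Optimizing in $\theta$ of order $\mu$ and using $\min(s,2)\geq 1$ together with $\mu\in (0,\mu_{0})$ yields the sharper upper bound on $\tilde{\Gamma}_{\mu}$. Since $\mathcal{L}(u)\geq \mathcal{Q}(u)=\mu$ because $\jpb{\xi}^{s}\geq 1$, the identity $\mathcal{L}(u) = \tilde{\Gamma}_{\mu} + \tilde{\mathcal{N}}(u)$ then forces $\tilde{\mathcal{N}}(u) \geq \mu - \tilde{\Gamma}_{\mu} \gtrsim \mu^{3}$.

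For the upper bound $\tilde{\mathcal{N}}(u) \lesssim \mu^{3}$, I exploit the fact that a minimizer must concentrate at low frequencies. Fix a smooth cutoff $\chi$ supported in $\abs{\xi}\leq 1$ with $\chi \equiv 1$ on $\abs{\xi}\leq \tfrac{1}{2}$, and split $u = u_{L} + u_{H}$ where $u_{L} = \chi(D/\kappa)u$, with $\kappa = \kappa(\mu)$ to be chosen of size $\sim\mu$. The estimate $\mathcal{L}(u)-\mu \leq \tilde{\mathcal{N}}(u) \lesssim \mu^{2}$ from \cref{eq:Hest}, together with the bound $\jpb{\xi}^{s}-1 \gtrsim \min(\kappa^{s},\kappa^{2})$ on $\abs{\xi}\geq \kappa$, controls the high-frequency mass by $\norm{u_{H}}_{L^{2}}^{2} \lesssim \mu^{2}/\kappa^{\min(s,2)}$, while Bernstein yields $\norm{u_{L}}_{L^{\infty}} \lesssim (\kappa\mu)^{1/2}$. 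Expanding $u^{2} = u_{L}^{2} + 2u_{L}u_{H} + u_{H}^{2}$ and estimating $\tilde{\mathcal{N}}(u) \eqsim \norm{u^{2}}_{H^{r/2}}^{2}$ via \cref{prop:product_rule_hilbert} and interpolation, together with the uniform bound $\norm{u}_{H^{s/2}}^{2}\lesssim\mu$, each piece can be controlled in terms of $\mu$, $\kappa$ and $\tilde{\mathcal{N}}(u)$. With $\kappa\sim\mu$ the leading low-frequency contribution scales like $\mu^{3}$, and the cross and high-frequency pieces are either sub-leading or absorbable on the left.

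The main obstacle is the upper bound. The nonlocal factor $\Lambda^{r}$ makes the frequency bookkeeping substantially more delicate than in the local $u^{4}$ case of \cite{maehlen2020}: for $r>0$ the operator amplifies high frequencies, so the bound on $\norm{u_{H}}_{L^{2}}$ must be balanced against an $H^{r/2}$-type cost; for $r<0$ one must exploit the smoothing on the product $u^{2}$ without losing powers of $\mu$. Sub-cases depending on the sign of $r$ and on whether $s\geq 1$ or $s<1$ will likely need separate treatment; in the low-$s$ regime one would invoke the second estimate in \cref{lem:q} in place of the $L^{4}$-based one. The crucial point is to tune $\kappa$ so that the absorption succeeds uniformly for $\mu\in(0,\mu_{0})$.
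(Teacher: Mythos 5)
Your overall architecture (sharpened infimum bound for the lower estimate, frequency decomposition plus absorption for the upper estimate) matches the paper's, but both halves as written have gaps.

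For the lower bound, your estimate \(\mathcal{L}(v)\leq \mu + C_{1}\mu\,\theta^{\min(s,2)}\) is not sharp enough, and the subsequent step does not close. The correct exponent is \(2\) for every \(s>0\): since \(\jpb{\theta\xi}^{s}-1\lesssim \abs{\theta\xi}^{2}+\abs{\theta\xi}^{s}\lesssim \theta^{2}\jpb{\xi}^{\max(s,2)}\) for \(0<\theta<1\) (the symbol vanishes quadratically at the origin), one gets \(\mathcal{L}(v)\leq\mu+C_{1}\mu\theta^{2}\), and then \(\theta=C_{3}\mu\) with \(C_{3}\) small gives \(\tilde{\mathcal{E}}(v)\leq\mu-(C_{2}C_{3}-C_{1}C_{3}^{2})\mu^{3}\). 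With your exponent \(\min(s,2)\) the error term is \(\mu^{1+\min(s,2)}\), which dominates the gain \(C_{2}\mu^{3}\) whenever \(s<2\); moreover the assertion \(\min(s,2)\geq 1\) is false in general, since only \(s>0\) is assumed (with \(r<s-1\) possibly very negative), and even granting it, \(\mu^{2}\not\lesssim\mu^{3}\). This is a fixable slip, but as stated the claimed bound \(\tilde{\Gamma}_{\mu}\leq\mu-c\mu^{3}\) does not follow.

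For the upper bound you give only a plan, and the plan has a quantitative obstruction. With the cutoff at \(\kappa\sim\mu\), the high-frequency block satisfies only \(\norm{u_{H}}_{L^{2}}^{2}\lesssim(\mathcal{L}(u)-\mu)/\kappa^{2}\lesssim\tilde{\mathcal{N}}(u)/\mu^{2}\), so \(\norm{u_{H}}_{H^{\frac{s}{2}}}^{2}\) is of size \(\tilde{\mathcal{N}}(u)/\mu^{2}\) (the frequencies between \(\mu\) and \(1\) carry essentially full \(L^{2}\) mass but contribute almost nothing to \(\mathcal{L}(u)-\mu\)); the self-interaction term \(\norm{u_{H}^{2}}_{H^{\frac{r}{2}}}^{2}\lesssim\norm{u_{H}}_{H^{\frac{s}{2}}}^{4}\) is then of size \(\tilde{\mathcal{N}}(u)^{2}/\mu^{4}\), which at the expected scale \(\tilde{\mathcal{N}}\sim\mu^{3}\) is of order \(\mu^{2}\gg\mu^{3}\) and is not absorbable on the left. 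The paper avoids this by cutting at the \emph{fixed} frequency \(1\): then \(\norm{u_{2}}_{H^{\frac{s}{2}}}\eqsim\norm{u_{2}}_{\dot{H}^{\frac{s}{2}}}\lesssim(\mathcal{L}(u)-\mu)^{\frac{1}{2}}\lesssim\tilde{\mathcal{N}}(u)^{\frac{1}{2}}\) with no loss of \(\kappa\), while the low part is handled not by Bernstein but by Gagliardo--Nirenberg, \(\norm{\Lambda^{\frac{r}{2}}u_{1}}_{L^{4}}^{4}\lesssim\norm{\Lambda^{\frac{r}{2}}u_{1}}_{L^{2}}^{3}\norm{\Lambda^{\frac{r}{2}}u_{1}}_{\dot{H}^{1}}\lesssim\mu^{\frac{3}{2}}(\mathcal{L}(u)-\mu)^{\frac{1}{2}}\), using that \(\mathcal{L}(u)-\mu\) controls \(\norm{u_{1}}_{\dot{H}^{1}}^{2}\) on \(\abs{\xi}\leq 1\). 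Together with the a priori bound \(\tilde{\mathcal{N}}(u)\lesssim\mu^{2}\) from \cref{eq:Hest}, every term is then \(\lesssim\mu^{\frac{3}{2}}\tilde{\mathcal{N}}(u)^{\frac{1}{2}}\), and the inequality \(\tilde{\mathcal{N}}(u)\lesssim\mu^{\frac{3}{2}}\tilde{\mathcal{N}}(u)^{\frac{1}{2}}\) self-improves to \(\tilde{\mathcal{N}}(u)\lesssim\mu^{3}\). I recommend replacing the \(\mu\)-dependent cutoff with this fixed-frequency splitting.
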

\begin{proof}
    The first step is to show that there is a \(\kappa>0\) such that 
     \begin{equation}
        \tilde{\Gamma}_{\mu} < \mu - \kappa \mu^3 \label{eq:infimum_improved}
    \end{equation}
    Pick \(\phi\in \mathcal{S}( \mathbb{R})\) such that \(\mathcal{Q}(\phi)= 1\), and define \(\phi_{\mu, t}(x) = \sqrt{\mu t}\phi(tx)\). Then \(\mathcal{Q}(\phi_{\mu,t}) = \mu\). In a similar manner as in the proof of \cref{lem:infimum_gamma}, one may show that there are constants \(C_{1}, C_{2}>0\) such that
    \begin{align*}
        \mathcal{L}(\phi_{\mu, t})&\leq \mu + C_{1} t^2 \mu,\\
        \tilde{\mathcal{N}}(\phi_{\mu, t})&\geq C_{2} \mu^{2} t.
    \end{align*}
    Set \(t= C_{3}\mu\), with \(C_{3}<\frac{1}{\mu_{0}}\), such that \(t<1\) for all \(\mu \in (0,\mu_{0})\). Then 
    \begin{equation*}
        \tilde{\mathcal{E}}(\phi_{\mu,t}) \leq \mu - \mu^3(C_{2}C_{3}- C_{1}C_{3}^2) = \mu - \kappa \mu^3,
    \end{equation*}  
    where we can guarantee that \(\kappa>0\) by picking \(C_{3}\) sufficiently small.
    
    The lower bound on \(\tilde{\mathcal{N}}(u)\) follows immediately from \cref{eq:infimum_improved}, 
    \begin{equation*}
        \tilde{\mathcal{N}}(u) = \mathcal{L}(u) - \tilde{\mathcal{E}}(u) \geq \kappa \mu^3.
    \end{equation*}
    To bound \(\tilde{\mathcal{N}}(u)\) from above, let \(\chi\) denote the characteristic function equal to \(1\) on \([-1,1]\) and \(0\) otherwise. Set \(u = u_{1} + u_{2}\), where \(\hat{u}_{1} = \chi \hat{u}, \hat{u}_{2} = \hat{u}\). Then \(\left\|u_{2}\right\|_{H^{\frac{s}{2}}}\eqsim \left\|u_{2} \right\|_{\dot{H}^{\frac{s}{2}}}\) and 
    \begin{equation}
        \mathcal{L}(u) - \mu = \frac{1}{2}\int_{\mathbb{R}} (\left\langle \xi \right\rangle^{s}-1)\left|\hat{u}_1 + \hat{u}_{2} \right|^{2}\,d\xi \eqsim \left\| u_{1}\right\|_{\dot{H}^{1}}^{2} + \left\|u_{2} \right\|_{\dot{H}^{\frac{s}{2}}}^{2},\label{eq:est_hom}
    \end{equation}
    since \(\left\langle \xi \right\rangle^{s} -1\) grows like \(\left|\xi\right|^{s}\) or \(\left|\xi \right|^{2}\) when \(\left|\xi \right|\) is large or small respectively, and \(\hat{u}_1, \hat{u}_2\) have disjoint supports. Now, 
    \begin{equation}
        \tilde{\mathcal{N}}(u) \eqsim \left\|(u_{1} + u_{2})^{2} \right\|_{H^{\frac{r}{2}}}^{2} \lesssim \left\| u_{1}^{2} \right\|_{H^{\frac{r}{2}}}^{2} + \left\|u_{2}^2 \right\|_{H^{\frac{r}{2}}}^{2} + \left\|u_{1}u_{2} \right\|_{H^{\frac{r}{2}}}^{2}, \label{eq:estN}
    \end{equation}
    and we estimate the three terms separately. Using \cref{lem:N_upper,eq:est_hom,eq:Hest}, 
    \begin{equation}
        \left\|u_{2}^2 \right\|_{H^{\frac{r}{2}}}^{2} \lesssim \left\|u_{2} \right\|_{H^{\frac{s}{2}}}^{4} \eqsim \left\|u_{2} \right\|_{\dot{H}^{\frac{s}{2}}}^4 \lesssim (\mathcal{L}(u)-\mu)^{2} \lesssim \tilde{\mathcal{N}}(u)^2 \lesssim \mu^{\frac{3}{2}}\tilde{\mathcal{N}}(u)^{\frac{1}{2}}.\label{eq:est1}
    \end{equation}
    Similarly, 
    \begin{equation}
        \left\|u_{1}u_{2} \right\|_{H^{\frac{r}{2}}}^2 \lesssim \left\|u_{1} \right\|_{H^{\frac{s}{2}}}^{2} \left\|u_{2} \right\|_{\dot{H}^{\frac{s}{2}}}^2 \lesssim \mu (\mathcal{L}(u)-\mu) \lesssim \mu \tilde{\mathcal{N}}(u)\lesssim\mu^{\frac{3}{2}}\tilde{\mathcal{N}}(u)^{\frac{1}{2}}.\label{eq:est2}
    \end{equation}
    Estimating \(\left\|u_{1}^2 \right\|_{H^{\frac{r}{2}}}^2\) requires a little more care. If \(r\leq 0\), then  \(\left\|u_{1}^2 \right\|_{H^{\frac{r}{2}}}^2 \lesssim \left\|u_{1} \right\|_{L^4}^4 \), whereas if \(r>0\), then 
    \begin{equation*}
        \left\|u_{1}^2 \right\|_{H^{\frac{r}{2}}}^2 = \left\|\left\langle \cdot \right\rangle^{\frac{r}{2}} \hat{u}_1\ast \hat{u}_1 \right\|_{L^2}^2 \lesssim  \left\|\Lambda^{\frac{r}{2}}u_{1} \right\|_{L^4}^4, 
    \end{equation*}
    where we used that \(\left\langle \xi \right\rangle \lesssim \left\langle \xi-\eta \right\rangle \left\langle \eta \right\rangle\). Since \(\left\|u_{1} \right\|_{L^4}^4 \lesssim \left\|\Lambda^{\frac{r}{2}}u_{1} \right\|_{L^4}^4\) for \(r>0\), it suffices to estimate the latter. We can use the Gagliardo-Nirenberg-Sobolev interpolation inequality, \cref{eq:Hest} and \cref{eq:est_hom} to find that 
    \begin{equation}
        \left\|\Lambda^{\frac{r}{2}}u_{1} \right\|_{L^4}^4 \lesssim \left\|\Lambda^{\frac{r}{2}}u_{1} \right\|_{L^{2}}^3 \left\|\Lambda^{\frac{r}{2}}u_{1}  \right\|_{\dot{H}^{1}} \lesssim \left\|u_{1} \right\|_{H^{\frac{s}{2}}}^{3} \left\|u_{1} \right\|_{\dot{H}^1} \lesssim \mu^{\frac{3}{2}} (\mathcal{L}-\mu)^{\frac{1}{2}}\lesssim \mu^{\frac{3}{2}} \tilde{\mathcal{N}}(u)^{\frac{1}{2}}.\label{eq:est3}
    \end{equation}
    Combining \cref{eq:estN,eq:est1,eq:est2,eq:est3} we conclude that
    \[
    \tilde{\mathcal{N}}(u) \lesssim \mu^3.
    \] 
\end{proof}
The estimate of the wave speed in \cref{eq:small_solutions_est} is an immediate consequence of \cref{lem:N_improved}. Using \cref{eq:nu}, \(\mathcal{L}(u)\geq \mu, \tilde{\mathcal{E}}(u)<\mu\) and  \(\tilde{\mathcal{N}}(u)\eqsim \mu^3\), 
\begin{equation*}
    1- \nu =  \frac{\mu - \tilde{\mathcal{E}}(u) + \tilde{\mathcal{N}}(u)}{\mu} \geq \frac{\tilde{\mathcal{N}}(u)}{\mu} \eqsim \mu^2  \eqsim\frac{2\tilde{\mathcal{N}}(u)}{\mu} \geq \frac{\mu - \mathcal{L}(u) + 2\tilde{\mathcal{N}}(u)}{\mu} = 1 - \nu.
\end{equation*}

Finally, we want to estimate \(\left\|u \right\|_{L^{\infty}}\), finishing the proof of \cref{eq:small_solutions_est}. This can be done by estimating \(\left\|u \right\|_{L^{\infty}}\lesssim \left\|u \right\|_{H^{1}}\) and then using a similar bootstrap argument as in \cref{sec:wave_speed_reg}. However, we need to ensure that the estimate \(\left\|u \right\|_{H^1} \lesssim \left\|u \right\|_{H^{\frac{s}{2}}}\) is uniform in \(\mu\in(0,\mu_{0})\). 
Increasing powers of \(\mu\) in each iteration does not cause problems as we can always estimate \(\mu\leq \mu_{0}\), but we want to make sure that we do not lose any powers of \(\mu\) when we estimate \(u\) by \(u\Lambda^r u^2\). We used that the operator \((\Lambda^s- \nu)\) was invertible, but the operator norm of \((\Lambda^s - \nu)^{-1}\) will blow up as \(\mu\to 0\). Therefore, we instead rewrite \cref{eq:solitary_wave} as
\begin{equation}
    (\Lambda^s - \nu + 1)u = u\Lambda^ru^2 + u,\label{eq:bootstrap2}
\end{equation}
and note that the operator norm of \((\Lambda^s - \nu +1)^{-1}\colon H^{\alpha}(\mathbb{R})\to H^{\alpha +t}(\mathbb{R})\) is independent of \(\mu\in (0,\mu_{0})\),
\begin{equation*}
    \left\|\left( \Lambda^s- \nu + 1 \right)^{-1} \right\|_{H^{\alpha}\to H^{\alpha + t}}= \sup_{\xi\in\mathbb{R}}\frac{\left\langle \xi \right\rangle^{s} }{\left\langle \xi \right\rangle^{s} - \nu + 1 }\leq  1,
\end{equation*}
since \(1-\nu \eqsim \mu^{2}\) (see also Proposition 10 in \cite{maehlen2020}). \Cref{lem:regularity} clearly also holds with \(u\Lambda^r u^2 + u\) instead of \(u\Lambda^r u^2\), and so the argument is exactly the same as in \cref{sec:wave_speed_reg}, only using \cref{eq:bootstrap2} instead of \cref{eq:bootstrap}. We conclude that 
\begin{equation*}
    \left\|u \right\|_{L^{\infty}}\lesssim \left\|u \right\|_{H^1}\lesssim \left\|u \right\|_{H^{\frac{s}{2}}}\eqsim \mu.
\end{equation*}

}
\section{Some comments on possible generalizations}
\label{sec:general_assumptions}
In \cite{arnesen2016}, \cite{maehlen2020}, existence of solitary waves to \cref{eq:model_general} is proved for a more general operator than \(L=\Lambda^{s}\). Likewise, the modifications due to the nonlocal nonlinearity presented in this paper do not rely heavily on special properties of \(\Lambda^r\). In fact, for the equation
\begin{equation*}
    \partial_t u + \partial_x(Lu - uNu^2) = 0, 
\end{equation*}
the proof of \cref{thm:main} will go through under the following assumptions on \(L\) and \(N\).
\begin{itemize}
    \item[(A1)] The symbol \(m\in C(\mathbb{R})\) of the operator \(L\) is real-valued, positive, even and satisfies the growth bounds
    \begin{align*}
        m(\xi) - m(0) \eqsim |\xi|^s \quad\,\, &\text{for }|\xi|\geq 1,\\
        m(\xi) - m(0) \eqsim |\xi|^{s'} \quad &\text{for }|\xi| <1,
        \end{align*}
    with \(s> 1/2, s'> 1\). Furthermore, we require that \(\xi \mapsto m(\xi)/\jpb{\xi}^s\) be uniformly continuous on \(\mathbb{R}\). 
    \item[(A2)] The symbol \(n\in C^1(\mathbb{R})\) of the operator \(N\) is real-valued, even, and satisfies the growth bounds
    \begin{align*}
        n(\xi) &\eqsim\jpb{\xi}^{r} \quad\quad \text{for }\xi \in \mathbb{R},\,\,\\
        \Big|\frac{\partial n}{\partial \xi}(\xi)\Big|&\lesssim \jpb{\xi}^{r-1} \quad \text{for }\xi\in\mathbb{R}.
    \end{align*}
    where \(r<s-1\).     
\end{itemize}
The assumptions on \(L\) are as in \cite{maehlen2020}. One may observe that \(m(\xi) = \jpb{\xi}^s\) satisfies these assumptions with \(s'=2\). We refer to \cite{maehlen2020} for a detailed discussion and comment only on a few important changes. Since \(m\) can be homogenous, we use \(\norm{u}_{H^{\frac{s}{2}}}^2\eqsim \mathcal{Q}(u) + \mathcal{L}(u)\) instead of \(\norm{u}_{H^{\frac{s}{2}}}^2\eqsim \mathcal{L}(u)\), and the upper bound for the infimum becomes \(\Gamma_{\mu}<m(0)\mu\) instead of \(\Gamma_{\mu}<\mu\). The uniform continuity of \(m(\xi)/\jpb{\xi}^s\) is used to exclude dichotomy. 

For the symbol \(n\), a similar regularity assumption to the one on \(m\) would allow us to prove \cref{lem:commutator_N} and exclude dichotomy. However, while a Fourier multiplier is continuous on \(L^2(\mathbb{R})\) as long as it is uniformly bounded, a stronger assumption is needed to ensure continuity on \(L^{p}(\mathbb{R}), p\neq 2\). There are different criteria that guarantee this; continuous differentiability and the bound on \(\partial n/\partial \xi\) suffices \cite[Theorem 5.2.7]{grafakos2014}. Unlike \(m\), the symbol \(n\) must be inhomogeneous to establish the upper bound on \(\Gamma_{\mu}\) by a long-wave ansatz as is done here. A physical interpretation is that solitary waves arise when dispersive and nonlinear effects are balanced, and inhomogeneity of \(n\) ensures that \(uNu^2\) is nonlinear in the long-wave limit. 

Under assumptions A1 and A2, the wave-speed estimate in \cref{eq:small_solutions_est} would instead be
\begin{equation*}
    m(0)-\nu \eqsim \mu^{\beta}, \quad \text{where}\quad \beta = \frac{s'}{s'-1}.\label{eq:small_solutions_est2}
\end{equation*}
This is the same estimate that was shown for an equation like \cref{eq:model_general} with a local nonlinearity with a cubic leading-order term in \cite{maehlen2020}. 

{
\section{Acknowledgements}
The author gratefully acknowledges the carful reading and valuable inputs from the referees that helped improve the structure of the manuscript.}
\appendix
\section{Technical tools}
\label{appendix}
\begin{proposition}[Products in \(H^t(\mathbb{R})\) {\cite[Theorem 4.6.1/1]{runst2011}}]
    Assume that \(t_1, t_2 \in \mathbb{R}\) satisfy
    \begin{equation*}
    t_1 \leq t_2 \quad \text{and} \quad t_1 + t_2 > 0.
    \end{equation*}
    Furthermore, let \(f\in H^{t_1}(\mathbb{R}), g\in H^{t_2}(\mathbb{R})\). 
    \begin{enumerate}[label = \emph{(\roman*)}]
    \item Let \(t_2> 1/2\). Then
    \[
    \norm{fg}_{H^{t_1}}\lesssim \norm{f}_{H^{t_1}}\norm{g}_{H^{t_2}}.
    \]
    \item Let \(t_2 <1/2\). Then
    \[
    \norm{fg}_{H^{t_1 + t_2 - 1/2}} \lesssim \norm{f}_{H^{t_1}}\norm{g}_{H^{t_2}}.
    \]
    \end{enumerate}
    \label{prop:product_rule_hilbert}
    \end{proposition}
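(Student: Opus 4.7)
The plan is to work on the Fourier side using the Littlewood--Paley decomposition and Bony's paraproduct formula
\[
fg = T_g f + T_f g + R(f,g),
\]
where $T_g f = \sum_j S_{j-2} g \cdot \Delta_j f$ (the low-frequency part of $g$ times the $j$-th dyadic block of $f$), $T_f g$ is symmetric, and $R(f,g) = \sum_{|j-k|\le 1} \Delta_j f \, \Delta_k g$ is the high--high remainder. Each piece is estimated separately, using (a) the frequency localization of each summand to reduce the $H^t$-norm to a weighted $\ell^2$ sum over dyadic blocks, and (b) Bernstein's inequality to control $L^\infty$-norms of low-frequency cutoffs in terms of $L^2$-norms.

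For the paraproduct $T_g f$, the Fourier support of $S_{j-2}g\cdot \Delta_j f$ sits in an annulus of radius $\sim 2^j$, so
\[
\|T_g f\|_{H^{t_1}}^2 \lesssim \sum_j 2^{2j t_1}\|S_{j-2} g\|_{L^\infty}^2\|\Delta_j f\|_{L^2}^2.
\]
In case (i), the embedding $H^{t_2}\hookrightarrow L^\infty$ valid for $t_2 > 1/2$ bounds $\|S_{j-2}g\|_{L^\infty}$ uniformly by $\|g\|_{H^{t_2}}$, and the dyadic sum collapses to the desired estimate. The term $T_f g$ is handled symmetrically: when $t_1 \le 1/2$ one incurs a Bernstein loss $2^{j(1/2-t_1)}$ in $\|S_{j-2}f\|_{L^\infty}$, which is absorbed by the gap $t_2 - t_1 \ge t_2 - 1/2 > 0$.

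In case (ii) no $L^\infty$-embedding is available and every Bernstein application costs $2^{j(1/2-t_2)}$, which precisely accounts for the $1/2$-loss in regularity on the left-hand side. The remainder $R(f,g)$ is the most delicate piece, since the summand $\Delta_j f\,\Delta_k g$ with $|j-k|\le 1$ has Fourier support in a \emph{ball} of radius $\sim 2^j$ rather than an annulus; summing such blocks in $H^s$ is only straightforward when $s > -1/2$. This is where the hypothesis $t_1 + t_2 > 0$ enters: after the Bernstein loss the target regularity of the remainder becomes $t_1 + t_2 - 1/2 > -1/2$, and the dyadic sum converges.

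The main obstacle is coordinating the Bernstein losses with the dyadic-summation bookkeeping so that the three pieces fit together under the single hypothesis $t_1 \le t_2$, $t_1 + t_2 > 0$. This requires case analysis on the signs of $t_1$ and of $t_1 + t_2 - 1/2$, but no ideas beyond standard paraproduct calculus are needed; the sharpness of the $1/2$-loss in (ii) is explained entirely by the remainder term, while the $L^\infty$-embedding in (i) lets the paraproducts retain the full regularity $t_1$.
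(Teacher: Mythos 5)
The paper does not actually prove this proposition: it is quoted from Runst--Sickel \cite[Theorem 4.6.1/1]{runst2011} and used as a black box, so there is no internal proof to compare against. Your Littlewood--Paley/paraproduct argument is the standard route to these estimates (and is essentially how the cited source obtains them, in the greater generality of Besov and Triebel--Lizorkin scales), and its overall architecture is right: the two paraproducts are annulus-localized, so they can be resummed at any regularity; the low-frequency factor is controlled either by the embedding \(H^{t}\hookrightarrow L^{\infty}\) when \(t>1/2\) or by a Bernstein loss \(2^{j(1/2-t)}\) when \(t<1/2\); this produces exactly the \(1/2\)-loss in (ii), and in (i) the loss from \(T_fg\) lands in \(H^{t_1+t_2-1/2}\subset H^{t_1}\) because \(t_2>1/2\).

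The one step you should not wave at is the remainder. The summands \(\Delta_jf\,\Delta_kg\), \(|j-k|\le 1\), are ball-supported, and the standard ball-summation lemma (if \(u_j\) is supported in a ball of radius \(\sim 2^j\) and \(\bigl(2^{js}\|u_j\|_{L^2}\bigr)_j\in\ell^2\), then \(\sum_ju_j\in H^s\)) requires \(s>0\), not \(s>-1/2\) as you assert. Since the target regularity \(t_1+t_2-1/2\) can be negative under the stated hypotheses (take \(t_1=-0.1\), \(t_2=0.3\)), a direct application fails. The correct mechanism is to first sum the remainder in the \(L^1\)-based space \(B^{t_1+t_2}_{1,1}\), whose regularity index \(t_1+t_2\) is positive by hypothesis --- this is precisely where \(t_1+t_2>0\) enters --- and then invoke the one-dimensional Bernstein embedding \(B^{\sigma}_{1,1}\hookrightarrow H^{\sigma-1/2}\). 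You have correctly identified that \(t_1+t_2>0\) is the operative hypothesis and that the remainder is the piece it controls, so this is a gap in bookkeeping rather than in the idea; with the \(L^1\) detour spelled out, the proof closes.
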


{
\begin{proposition}[Products in \(L^{p}(\mathbb{R})\) {\cite[Theorem 4.4.4/3]{runst2011}}]
    Assume that \(t>0\) and that \(1<p, q<\infty\) satisfy
    \begin{equation*}
        \frac{1}{p}< \frac{1}{q} + \frac{1}{2}, \qquad \frac{1}{p}> \frac{1}{q} + \left[\frac{1}{2} - t\right]^+, \qquad t > \frac{1}{q} - \frac{1}{2},
    \end{equation*}
    where \([a]^+ = \max(a,0)\). Let \(f\in L^{q}(\mathbb{R}), g\in H^{t}(\mathbb{R})\). Then 
    \begin{equation*}
        \norm{fg}_{L^{p}}\lesssim \norm{f}_{L^{q}}\norm{g}_{H^{t}}.
    \end{equation*}
    \label{prop:product_rule_general}
\end{proposition}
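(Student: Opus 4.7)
The strategy I would follow is to reduce the inequality to Hölder's inequality combined with a Sobolev embedding of \(H^t(\mathbb{R})\) into an auxiliary Lebesgue space. The whole content of the three numerical conditions on \(p, q, t\) is to ensure that this auxiliary Lebesgue exponent falls in the range where both tools apply.

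Concretely, introduce \(r\) by \(\frac{1}{r} = \frac{1}{p} - \frac{1}{q}\). From the first assumption, \(\frac{1}{r} < \frac{1}{2}\), so \(r > 2\); from the second assumption, \(\frac{1}{r} > [\frac{1}{2}-t]^+ \geq 0\), so \(r < \infty\). Thus \(r \in (2,\infty)\) and Hölder's inequality yields
\begin{equation*}
\norm{fg}_{L^p} \leq \norm{f}_{L^q}\norm{g}_{L^r}.
\end{equation*}
The next step is the Sobolev embedding \(H^t(\mathbb{R}) \hookrightarrow L^r(\mathbb{R})\), which for \(r \in [2,\infty)\) holds as soon as \(t \geq \frac{1}{2} - \frac{1}{r}\). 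When \(t \geq 1/2\) this is automatic; when \(t < 1/2\) it rearranges to \(\frac{1}{r} \geq \frac{1}{2} - t\), which is precisely (and strictly) what the second assumption provides. Combining the two steps then gives \(\norm{fg}_{L^p} \lesssim \norm{f}_{L^q}\norm{g}_{H^t}\).

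The only subtle point is checking the compatibility of the chosen \(r\) with the hypotheses \(p, q \in (1,\infty)\), and in particular that the construction is consistent with \(\frac{1}{p} < 1\). A short computation shows that this is exactly where the third condition \(t > \frac{1}{q} - \frac{1}{2}\) is used: it is a genuine restriction only in the regime \(q < 2\) and \(t < 1/2\), and there it forces \(\frac{1}{q} + \frac{1}{2} - t < 1\), which together with the second assumption keeps \(\frac{1}{p}\) strictly below \(1\). Since the statement is classical and quoted from Runst--Sickel, I expect no real obstacle beyond this bookkeeping, and I would not attempt a reproof via Littlewood--Paley decomposition or paraproducts, which is the natural machinery for the full Besov and Triebel--Lizorkin generalization that is proved in the cited reference.
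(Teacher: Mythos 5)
Your argument is correct. The paper itself gives no proof of this proposition --- it is imported verbatim from Runst--Sickel \cite[Theorem 4.4.4/3]{runst2011} --- so any complete argument is ``different from the paper's''; yours is a clean, elementary one that fully covers the case actually stated. Setting \(\frac{1}{r}=\frac{1}{p}-\frac{1}{q}\), the second hypothesis gives \(\frac{1}{r}>\left[\frac{1}{2}-t\right]^{+}\geq 0\) and the first gives \(\frac{1}{r}<\frac{1}{2}\), so \(r\in(2,\infty)\), Hölder applies, and the embedding \(H^{t}(\mathbb{R})\hookrightarrow L^{r}(\mathbb{R})\) holds because \(\frac{1}{r}>\frac{1}{2}-t\) (strictly, so you do not even need the critical-index case). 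One can also check that your construction is consistent with the application in the paper's Lemma \ref{lem:q}, where \(t=\frac{s}{2}\), \(\frac{1}{p}=\frac{1-\tilde r}{2}\), \(\frac{1}{q}=\frac{s/2-\tilde r}{2}\) give \(\frac{1}{r}=\frac{2-s}{4}>\frac{1-s}{2}\). Two small remarks. First, your reading of the third condition is slightly backwards: the second hypothesis is a \emph{lower} bound on \(\frac{1}{p}\), so it cannot ``keep \(\frac{1}{p}\) below \(1\)''; rather, when \(t<\frac{1}{2}\) and \(q<2\) the third condition is exactly what prevents that lower bound from exceeding \(1\) and thus keeps the hypothesis set nonempty --- it is a consistency condition, unused in the estimate itself, and presumably present in \cite{runst2011} because the theorem there is stated for general Besov and Triebel--Lizorkin scales. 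Second, what your elementary route gives up relative to the cited theorem is precisely that generality (and the paraproduct machinery it requires); for the \(L^{2}\)-based Sobolev scale used in this paper, nothing is lost.
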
}

\end{document}